\definecolor{r1}{rgb}{0.000,0.000,0.000}
\numberwithin{equation}{section}
\newcommand{\R}{{\mathbb R}}
\newcommand{\C}{{\mathbb C}}
\newcommand{\be}{\begin{eqnarray}}
\newcommand{\ben}{\begin{eqnarray*}}
\newcommand{\en}{\end{eqnarray}}
\newcommand{\enn}{\end{eqnarray*}}
\newcommand{\pa}{\partial}
\newcommand{\ov}{\overline}
\newtheorem{theorem}{Theorem}[section]
\newtheorem{lemma}[theorem]{Lemma}
\newtheorem{remark}[theorem]{Remark}
\definecolor{rot}{rgb}{1.000,0.000,0.000}
\begin{document}
\renewcommand{\theequation}{\arabic{section}.\arabic{equation}}
\begin{titlepage}
\title{A highly accurate perfectly-matched-layer boundary integral equation solver for acoustic layered-medium problems}
\author{Wangtao Lu\thanks{School of Mathematical Sciences, Zhejiang University, Hangzhou, Zhejiang 310027, China. Email: {\tt wangtaolu@zju.edu.cn}}\;,
Liwei Xu\thanks{School of Mathematical Sciences, University of Electronic Science and Technology of China, Chengdu, Sichuan 611731, China. Email: {\tt xul@uestc.edu.cn}}\;,
Tao Yin\thanks{LSEC, Institute of Computational Mathematics and Scientific/Engineering Computing, Academy of Mathematics and Systems Science, Chinese Academy of Sciences, Beijing 100190, China. Email:{\tt yintao@lsec.cc.ac.cn}}\;,
Lu Zhang\thanks{School of Mathematical Sciences, University of Electronic Science and Technology of China, Chengdu, Sichuan 611731, China. Email: {\tt luzhang@std.uestc.edu.cn}}}
\date{}
\end{titlepage}
\maketitle

\begin{abstract}
Based on the perfectly matched layer (PML) technique, this paper develops a high-accuracy boundary integral equation (BIE) solver for acoustic scattering problems in locally defected layered media in both two and three dimensions. The original scattering problem is truncated onto a bounded domain by the PML. Assuming the vanishing of the scattered field on the PML boundary, we derive BIEs on local defects only in terms of using PML-transformed free-space Green's function, and the four standard integral operators: single-layer, double-layer, transpose of double-layer, and hyper-singular boundary integral operators. \textcolor{r1}{The hyper-singular integral operator is transformed into a combination of weakly-singular integral operators and tangential derivatives.} We develop a high-order Chebyshev-based rectangular-polar singular-integration solver to discretize all weakly-singular integrals. Numerical experiments for both two- and three-dimensional problems are carried out to demonstrate the accuracy and efficiency of the proposed solver.

{\bf Keywords: Acoustic scattering, half-space, boundary integral equation, perfectly matched layer}
\end{abstract}

\section{Introduction}
\label{sec1}
The scattering problems in a locally perturbed half-space have attracted much attention of engineers and mathematicians for many years, which arise from various applications, such as modeling acoustic and electromagnetic wave propagation over outdoor ground and sea surface, optical scattering from the surface of materials in near-field optics or nano-optics, underwater detection~\cite{C95}. For unbounded exterior domain \textcolor{r1}{problems}, the boundary integral equation (BIE) method discretizes boundaries only, reducing the problem dimension by one, and automatically satisfies the outgoing radiation condition. Therefore, it has been widely used for various scattering problems~\cite{BXY17,BXY191,BET12,BY20,CBB91,CN02,DM97,MC96,TCPS02,YHX17,ZXY21,ZXY22}.

In the literature, existing BIE solvers for layered-medium scattering problems can be classified into two approaches. First approach uses the background Green's functions~\cite{C02,PGM00,PB14} to build up the governing BIEs. They are defined only on the bounded perturbed part of the scattering surface so that no truncation is required further. Nevertheless, background Green's functions involve sophisticated Sommerfeld integrals and how to effectively evaluate them becomes the key ingredient of this approach. The second approach uses the free-space Green functions instead and then the resulting BIEs are established on the unbounded scattering surface. Special treatments are required to truncate the unbounded surface. They include the approximate truncation method~\cite{MC01,SS11}, the taper function method~\cite{MSS14,SSS08,ZLSC05}, and the windowed Green function method~\cite{BD14,BGP17,BLPT16,BP17,BY21}. It is worth noting that among these methods, the windowed Green function method is the only high-accuracy method and enjoys a super-algebraically convergence rate. It introduces a correction that smoothly merges the unknown functions in the original integral equations with the corresponding scattered solutions for the unperturbed flat surface, thus providing uniformly fast convergence for all incident angles as the support of the windowing function grows. However, for point-source incidences, this method needs to deal with multiple spherical-wave incidences from the expansion of the incident fields.

In a recent work~\cite{LLQ18}, a PML-based BIE method was proposed for solving a two-layer wave scattering problem in two-dimensions. Similar to the second approach mentioned above, it builds up governing BIEs on an unbounded scattering surface but uses PML-transformed free-space Green's functions instead. Due to the outgoing behavior of the scattered field, waves along the unbounded surface decay exponentially inside the PML. Directly truncating the unbounded surface produces numerical solutions converging exponentially fast with the PML absorbing powers. This high-accuracy method has so far been successfully extended to more complicated structures, such as step-like scattering surfaces~\cite{L21}, anisotropic media~\cite{GL22}, and locally perturbed periodic structures~\cite{YHLR22}.

The present paper develops a new PML-based BIE solver, significantly improving the original one from two aspects. First, the original PML-based BIE method uses only the single-layer and double-layer operators, leading to a first-kind Fredholm system. The new method uses two extra operators: the transpose of double-layer operator and the hyper-singular operator to establish second-kind Fredholm systems. Second, Alpert's high-order quadrature rule~\cite{A99}, utilized by~\cite{LLQ18} in discretizing the integral operators, has not been extended to three dimensions yet. We transform the hyper-singular integral operator into weakly-singular operators and their tangential derivatives. Then, a Chebyshev-based rectangular-polar integral solver is utilized to discretize all weakly-singular integral operators for both two and three dimensions. The tangential derivatives are then obtained by directly differentiating the corresponding truncated Chebyshev expansions. Numerical examples demonstrate that, typically, we only need to set the PML thickness to twice the wavelength to obtain high accuracy and fast convergence for two- and three-dimensional scattering problems.

This paper is organized as follows. In Section~\ref{sec2}, we describe the acoustic half-space scattering problems under Dirichlet and Neumann boundary conditions, and present corresponding BIEs based on the free-space Green function. Section~\ref{sec3} presents the PML-based BIEs for solving Dirichlet and Neumann problems. In Section~\ref{sec4.1}, we derive the regularized formulations for two- and three-dimensional hyper-singular BIOs. A high order discretization method for the BIOs is proposed in Section~\ref{sec4.2}. \textcolor{r1}{Section~\ref{sec4.3} extends the new PML-based BIE solver to layered-medium scattering problems}. A variety of numerical examples in two and three dimensions are presented in Section~\ref{sec5} to illustrate the performance of our method.

\section{Preliminaries}
\label{sec2}
\subsection{Half-space scattering problems}
\label{sec2.1}
This section is devoted to an efficient and highly accurate boundary integral solver for the acoustic half-space problems. To simplify the presentation, we consider the acoustic scattering by a combination of
impenetrable bounded obstacles and an infinite flat surface which may consist of some local defects. An extension of the solver to more complicated structures shall be discussed in Section~\ref{sec4.3}.

\begin{figure}[htb]
	\centering
	\begin{tabular}{c}
		\includegraphics[scale=0.3]{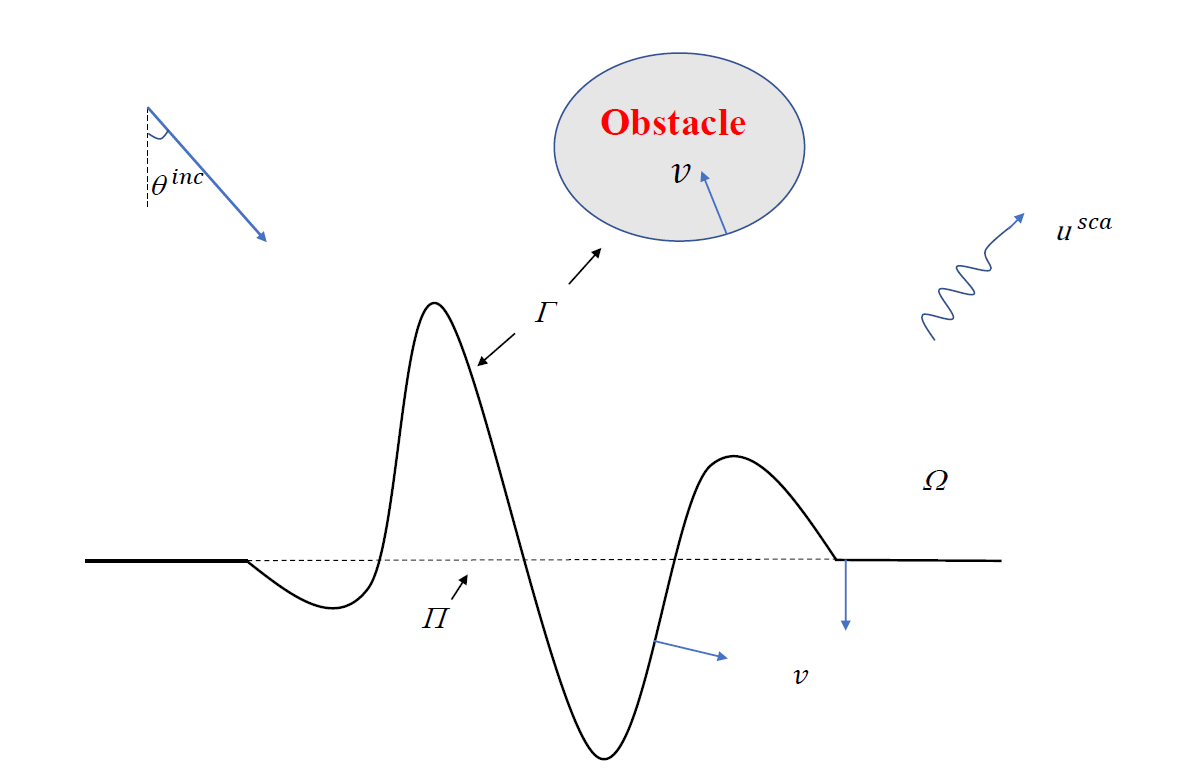}
	\end{tabular}
	\caption{Problems of scattering by a locally perturbed half-space.}
	\label{obstacle}
\end{figure}

As shown in Fig.~\ref{obstacle}, $\Omega\in\R^d$, $d=2,3$ denotes an unbounded connect open domain such that there exists constants $f_{-}<f_{+}$ with
\ben
U_{f_{+}} \subset \Omega \subset U_{f_{-}}, \quad U_{f_{\pm}}:=\left\{x=(x_1,...,x_d)\in\R^d: x_d>f_{\pm} \right\}.
\enn
The boundary $\Gamma:=\partial\Omega$ contains two parts: the unbounded flat surface
\ben
\Pi:=\left\{ x\in \R^d:x_d=0 \right\}
\enn
with some local defects and the boundary $\partial U_0$ of a bounded obstacle $U_0$. Let $u^{\mathrm{inc}}$ be a time-harmonic incident field which is a plane pressure wave:
\be
\label{PPW}
u^{\mathrm{inc}}(x)=\exp(ikx\cdot d^{\mathrm{inc}}),\quad x\in\R^d,
\en
or a point source located at $z\in\Omega$:
\be
\label{PS}
u^{\mathrm{inc}}(x)=G(x,z):=\begin{cases}
\frac{i}{4}H^{(1)}_0(k|x-z|), & d=2,\cr
\frac{\exp(ik|x-z|)}{4\pi|x-z|},& d=3,
\end{cases} \quad x\in\R^d,x\ne z,
\en
where $k>0$ denotes the angular frequency, $H^{(1)}_0$ is the first-kind Hankel function of order zero and the incident direction
\ben
d^{\mathrm{inc}}=\begin{pmatrix}
		\sin \theta^{\mathrm{inc}}\\
		-\cos \theta^{\mathrm{inc}}
\end{pmatrix} \quad \mbox{in} \quad \mbox{2D} \quad \mbox{and} \quad d^{\mathrm{inc}}=\begin{pmatrix}
		\sin \theta^{\mathrm{inc}}\cos\phi^{\mathrm{inc}}\\
		\sin \theta^{\mathrm{inc}}\sin\phi^{\mathrm{inc}}\\
		-\cos \theta^{\mathrm{inc}}
\end{pmatrix} \quad \mbox{in} \quad \mbox{3D},
\enn
with $\theta^{\mathrm{inc}}$ and $(\theta^{\mathrm{inc}},\phi^{\mathrm{inc}})$ being the incident angle and angle pair, respectively.

The scattered field $u^{\mathrm{sca}}$ satisfies the Helmholtz equation
\be
\label{HME}
&&\Delta u^{\mathrm{sca}}+k^2u^{\mathrm{sca}}=0.
\en
For simplicity, on the boundary $\Gamma$, we impose the Dirichlet boundary condition
\be
\label{ODBC}
&&u^{\mathrm{sca}}=f (:=-u^{\mathrm{inc}}-u_{\mathrm{ref}}^{\mathrm{sca}}),
\en
or the Neumann boundary condition
\be
\label{ONPC}
&&\partial_\nu u^{\mathrm{sca}}=g (:=-\partial_\nu u^{\mathrm{inc}}-\partial_\nu u_{\mathrm{ref}}^{\mathrm{sca}}).
\en
Here $\nu$ is the unit outward normal and $\partial_\nu:=\nu\cdot\nabla$ denotes the normal derivative. In addition, $u_{\mathrm{ref}}^{\mathrm{sca}}=0$ for incidence of point source. For the case of plane-wave incidence, $u_{\mathrm{ref}}^{\mathrm{sca}}$ represents the reflected field resulting from the scattering of the plane wave $u^{\mathrm{inc}}$ by the flat surface $\Pi$. In particular, it is easy to deduce that
\ben
u_{\mathrm{ref}}^{\mathrm{sca}}(x):=\begin{cases}
		-\exp(ikx'\cdot d^{\mathrm{inc}}), &\mbox{Dirichlet case}, \cr
	    \exp(ikx'\cdot d^{\mathrm{inc}}), &\mbox{Neumann case},
\end{cases}
\enn
where $x'=(x_1,\cdots,-x_d)$ denotes the imaging point of $x$ w.r.t. the flat surface $\Pi$.

At infinity, the scattered field satisfies the following half-space Sommerfeld radiation condition:
	\begin{equation}
		\label{eq:src}
		\lim_{r\to\infty}r^{(n-1)/2}(\partial_{r} - i k) u^{\rm sca} = 0,
	\end{equation}
uniformly holds in all directions for $x_3\geq 0$, where $r=|x|$; see the angular spectrum representation~\cite{DM97} and also the equivalent upward propagating radiation condition~\cite{AH05,CZ98}. \textcolor{r1}{For theories on direct and inverse rough-surface scattering problems, we refer readers to~\cite{BHY18,BL11,W87,ZZ13} for more details.}

\subsection{Boundary integral equations}
\label{sec2.2}
It follows from~\cite{DM97} that the scattered field $u^{\mathrm{sca}}$ admits the representation
\be
\label{SR}
u^{\mathrm{sca}}(x)=\int_\Gamma \left\{ G(x,y)\partial_{\nu_y} u^{\mathrm{sca}}(y)-\partial_{\nu_y} G(x,y)u^{\mathrm{sca}}(y) \right\}ds_y, \quad x\in \Omega.
\en
\textcolor{r1}{Letting $x\to\Gamma$ and applying the well-known jump conditions~\cite{CK98} lead to the BIEs on $\Gamma$}
\be
\label{BIED}
&&\left(\frac{1}{2}I +K\right)(u^{\mathrm{sca}})(x)=S(\partial_\nu u^{\mathrm{sca}})(x), \quad x\in\Gamma,\\
\label{BIEN}
&&\left(-\frac{1}{2}I +K'\right)(\partial_\nu u^{\mathrm{sca}})(x)=N(u^{\mathrm{sca}})(x), \quad x\in\Gamma,
\en
wherein
\be
\label{SO}
S(\varphi)(x)&:=&\int_\Gamma  G(x,y)\varphi (y)ds_y,\quad x\in\Gamma,\\
\label{KO}
K(\varphi)(x)&:=&\int_\Gamma \partial_{\nu_y}G(x,y) \varphi(y)ds_y,\quad x\in\Gamma,\\
\label{KTO}
K'(\varphi)(x) &:=&\int_\Gamma  \partial_{\nu_x} G(x,y)\varphi (y)ds_y,\quad x\in\Gamma,\\
\label{HO}
N(\varphi)(x) &:=&\int_\Gamma  \partial_{\nu_x}\partial_{\nu_y}G(x,y)\varphi (y)ds_y,\quad x\in\Gamma,
\en
denote, respectively, the single-layer, double-layer, transpose of double-layer, and hyper-singular BIOs. Especially, the hyper-singular operator $N$ is defined in the sense of Hadamard finite part~\cite{HW08}.

In light of the advantages of using second-kind Fredholm integral equations for solving large-scale problems, (\ref{BIED}) and (\ref{BIEN}) can be utilized for solving the Neumann and Dirichlet problems, respectively. However, the BIOs (\ref{SO})-(\ref{HO}) are defined on the whole unbounded surface $\Gamma$, so that we require appropriate truncation strategy for the numerical implementation.

A direct truncation of the infinite boundary $\Gamma$ in the BIEs (\ref{BIED})-(\ref{BIEN}) will lead to a large truncation error. In particular, a smooth windowing function is introduced in~\cite{BLPT16} for the truncation. As illustrated in that work, for plane-wave incidence, the direct windowing approach requires, for a given accuracy, increasingly large truncated domains as grazing incidence is approached. To overcome this poor performance, corrected formulations are proposed to uniform accuracy for all incident angles. However, for the case of point source, the incident field should be expressed by a superposition of plane incident waves, which means that a number of problems of plane incidence should be considered, and it is necessary to apply the Cauchy's theorem to deform the integration contour.

In~\cite{LLQ18}, a PML-based BIE method is proposed for solving the two-dimensional wave scattering problems in a layered medium. This method
truncates the considered infinite domain using the PML by directly imposing zero Dirichlet boundary condition on the PML boundary.  Alpert's hybrid Gauss-trapezoidal quadrature rule~\cite{A99} is utilized for the
numerical discretization of the two-dimensional single- and double-layer BIOs and high accuracy can be achieved for incidences of plane waves and point sources. Unfortunately, quadrature rules for surface integrals that are analogous to Alpert's are still absent so far. To tackle this difficulty, we develop a high-accuracy Chebyshev-based rectangular-polar integral solver that is applicable for both line and surface integrals respectively corresponding to the two- and three-dimensional problems, for the discretization of the PML-transformed versions of all four BIOs (\ref{SO})-(\ref{HO}) in the following.

\section{The truncated PML problems and boundary integral equations}
\label{sec3}

In this section, we briefly discuss the PML truncation strategy and introduce the reduced boundary integral equations for the truncated PML problems. The core idea of the PML is to construct artificial layers (of finite thickness) surrounding the physical bounded domain such that the outgoing waves $u^{\mathrm{sca}}$ and $\pa_\nu u^{\mathrm{sca}}$ decay rapidly in the PML region before reaching the PML outer boundary. Then the BIEs for the truncated PML problems can be derived.

\subsection{The PML stretching}
\label{sec3.1}

The PML method involves analytical stretching of the real spatial coordinates of the physical equations into the complex plane, along which the outward propagating waves must be attenuated in the absorbing layer. Following the coordinate stretching approach~\cite{CW94}, we introduce a complex change of spatial variable: $x\in \Omega \subset \R^d\mapsto \widetilde x\in \widetilde \Omega\subset \C^d$ defined as
\be
\label{CCS}
\widetilde x_i(x_i)=x_i+i\int_0^{x_i}\sigma_i(t)dt
\en
for $i=1,\cdots,d$, where we take $\sigma_i$
\be
\label{pos}
\sigma_i(t)=\sigma_i(-t),\quad \sigma_i(t)=0\;\;\mbox{for}\;\; \left| t \right|\le a_i,\quad \sigma_i(t)>0 \;\;\mbox{for}\;\; \left| t \right|> a_i,
\en
with $a_i>0$, $i=1,\cdots,d$. The domains with nonzero $\sigma_i$ are called the PML. For definiteness, throughout this paper we utilize the positive function $\sigma_i$~\cite{CK98,LH19,LLQ18}
\be
\label{sigma}
\sigma_i(x_i)=\left\{ {\begin{array}{ll}
		\frac{2Sf_1^P}{f_1^P+f_2^P}, & a_i\le x_i\le a_i+T_i,\\
		S,& x_i>a_i+T_i,\\
		0,&-a_i<x_i<a_i,\\
		\sigma_i(-x_i),&x\le -a_i,
\end{array}} \right.
\en
where $T_i>0$, $i=1,\cdots,d$ denote the thickness of the PML, $P$ is a positive integer,
\ben
f_i=\left(\frac{1}{2}-\frac{1}{P}\right)\overline x_i^3 +\frac{\overline x_i}{P}+\frac{1}{2},\quad f_2=1-f_1,\quad\overline x_i=\frac{x_i-\left(a_i+T_i\right)}{T_i}.
\enn
It can be seen that $\sigma_i$ maps $[a_i ,a_i+T_i]$ onto $[0 ,S]$ and its derivatives vanish at $x_i=\pm a_i$ up to order $P-1$. In addition, we choose $a_i>0$ such that the bounded Cartesian domain $B_a:=(-a_1, a_1)\times \cdots\times (-a_d, a_d)\subset \R^d$ encloses the bounded obstacle $U_0$ and the local defects on $\Gamma$. Then the infinite domain $\Omega$ can be truncated onto a bounded domain $\Omega^b$, which also leads to the corresponding truncated interface $\Gamma^b$ of $\Gamma$, using the box $B_{a,T}=(-a_1-T_1, a_1+T_1)\times\cdots\times (-a_d-T_d, a_d+T_d)$, i.e., $\Omega^b=\Omega\cap B_{a,T}$ and $\Gamma^b=\Gamma\cap B_{a,T}$. In addition, we denote $\Gamma^+=\partial\Omega^b\backslash\Gamma^b$.

\begin{figure}[htb]
	\centering
	\begin{tabular}{ccc}
		\includegraphics[scale=0.15]{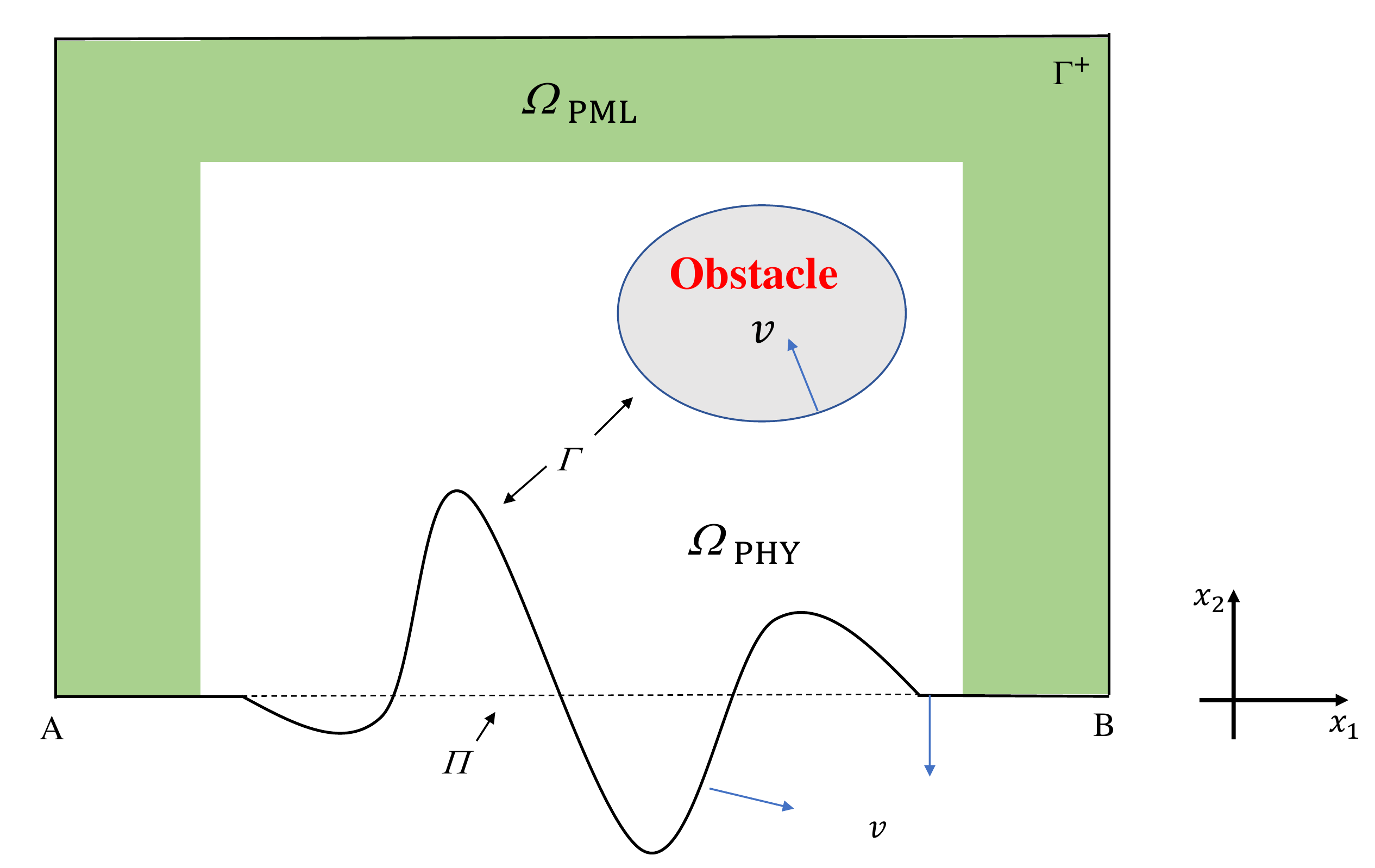} &
		\includegraphics[scale=0.15]{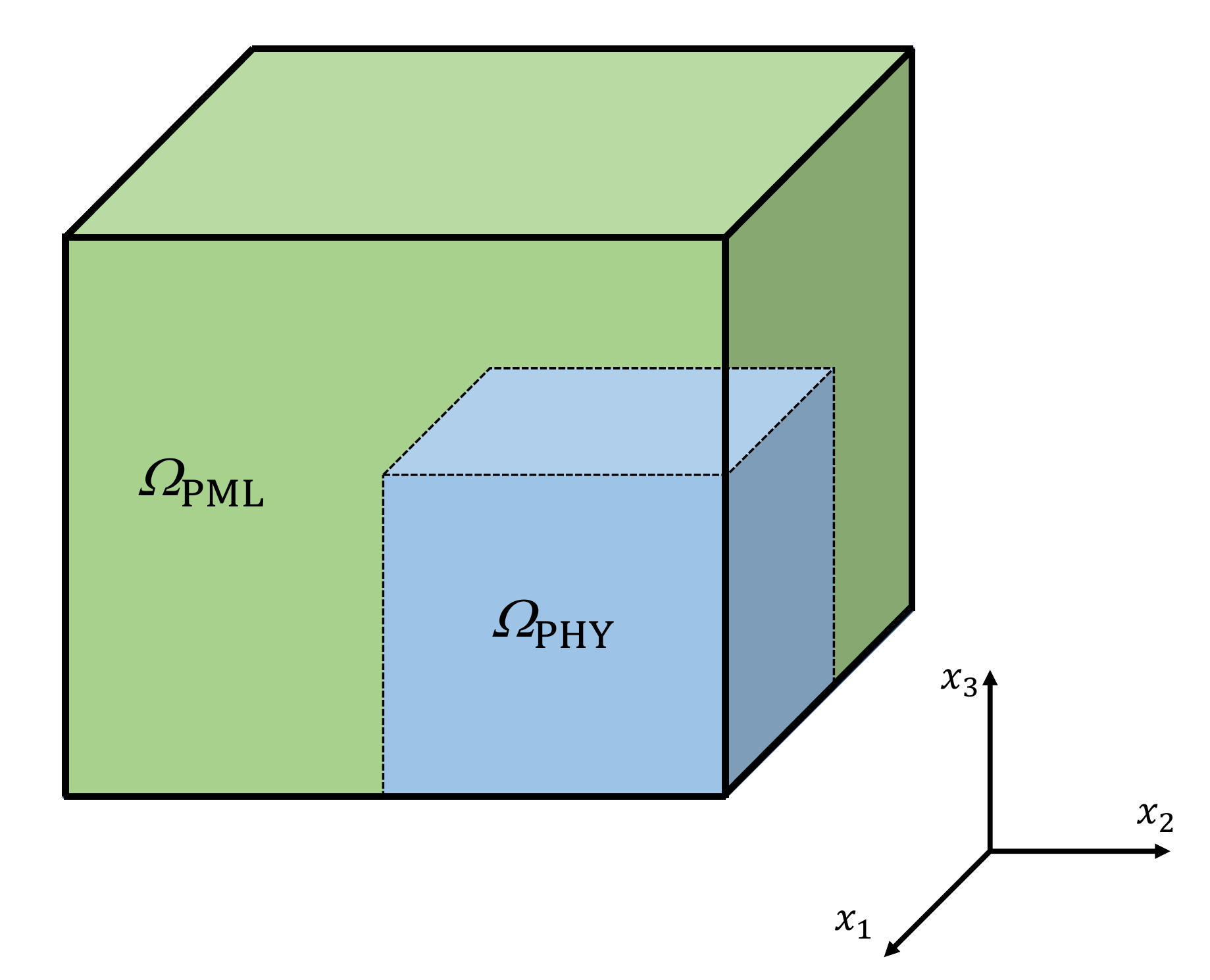} &
		\includegraphics[scale=0.18]{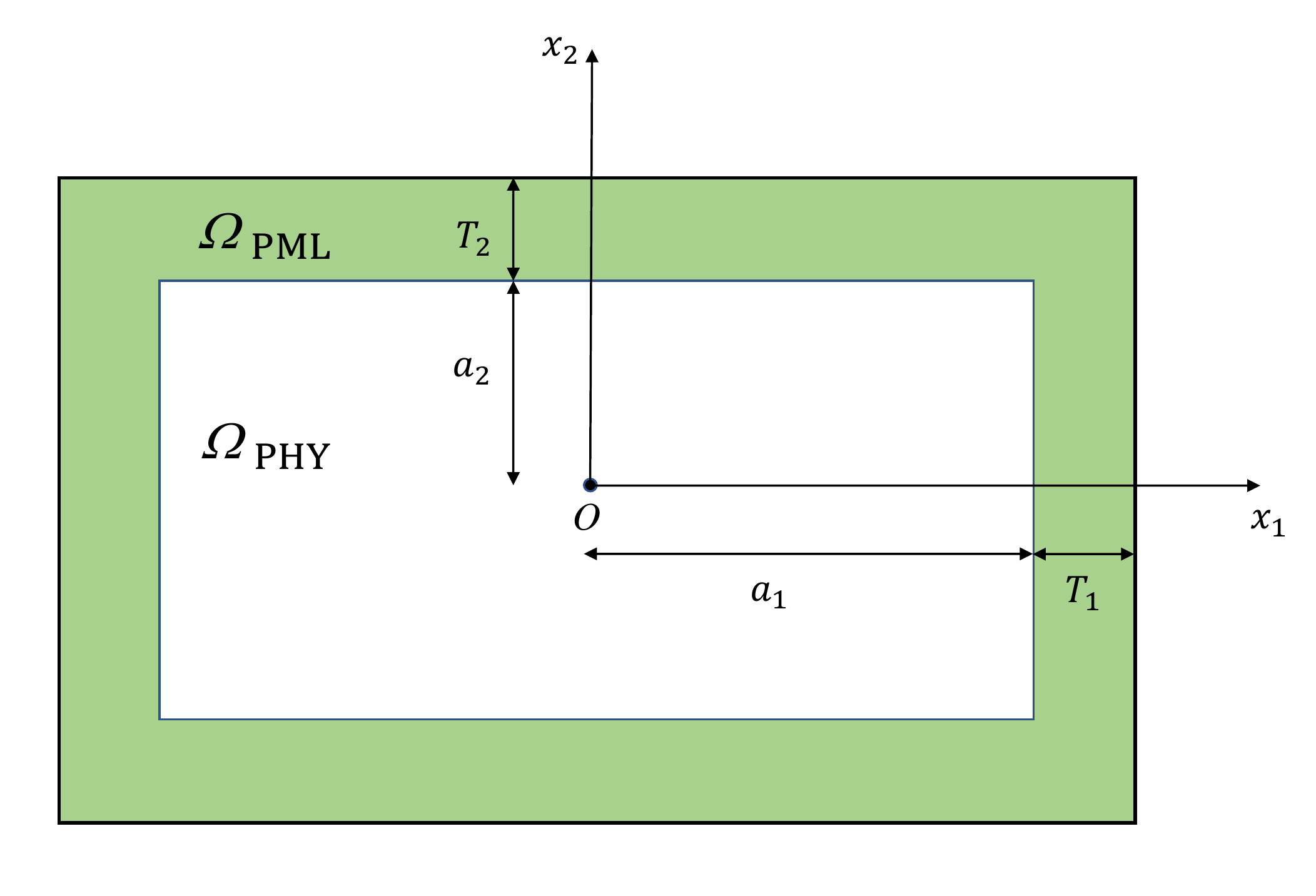} \\
		(a)&(b)&(c)
	\end{tabular}
	\caption{The PML-truncated domain $\Omega^b$, the physical bounded domain $\Omega_{\rm PHY}=B_a\cap \Omega$ and the PML region $\Omega_{{\rm PML}}=B_{a,T}\backslash\ov{B_a}$: (a) the PML truncation in two dimensions; (b) the quarter model of PML truncation in three dimensions; (c) the vertical view of PML truncation on the bottom infinite surface in three dimensions.}
	\label{PML23D}
\end{figure}

\subsection{The truncated PML problems}
\label{sec3.2}

Relying on the complex coordinate map (\ref{CCS}), the Helmholtz equation (\ref{HME}) on $\Omega^b$ can be transformed into the following form
\be
\label{CHE}
\widetilde \Delta u^{\mathrm{sca}}(\widetilde x)+k^2u^{\mathrm{sca}}(\widetilde x)=0 ,\quad \widetilde x\in\widetilde \Omega^b,
\en
where $\widetilde\Omega^b=\left\{ \widetilde x(x) | x\in \Omega^b \right\}$ and $\widetilde \Delta=\sum\limits_{i=1}^d \partial^2_{\widetilde x_i} $. Then from the Green's representation theorem~\cite{LS01}, the solution of (\ref{CHE}) can be represented by
\be
u^{\mathrm{sca}}(\widetilde x)=\int_{\Gamma^b}\left\{ G(\widetilde x,y)\partial_{\nu_y} u^{\mathrm{sca}}(y)-\partial_{\nu_y} G(\widetilde x,y)u^{\mathrm{sca}}(y) \right\}ds_y,\quad \widetilde x\in\widetilde \Omega^b.
\en
Defining the complex function $\widetilde u^{\mathrm{sca}}(x)=u^{\mathrm{sca}}(\widetilde x)$ in $\Omega^b$, we can rewrite the equation (\ref{CHE}) as
\be
\label{RCHE}
\nabla\cdot \left(\mathbf A\nabla \widetilde u^{\mathrm{sca}}\right)+k^2 J\widetilde u^{\mathrm{sca}}=0,\quad x\in\Omega^b,
\en
where
\ben
&&\alpha_i(x_i)=1+i\sigma_i(x_i),\quad i=1,...,d, \\
&&\mathbf A=\left\{ {\begin{array}{ll}
		\mbox{diag}\left\{\alpha_2/\alpha_1,\alpha_1/\alpha_2\right\},&d=2,\\
		\mbox{diag}\left\{\alpha_2\alpha_3/\alpha_1,\alpha_1\alpha_3/\alpha_2,\alpha_1\alpha_2/\alpha_3\right\},&d=3,
\end{array}} \right.\\
&& J=\left\{ {\begin{array}{ll}
		\alpha_1\alpha_2,&d=2,\\
		\alpha_1\alpha_2\alpha_3,&d=3.
\end{array}} \right.
\enn

On $\Gamma^b$, the original Dirichlet and Neumann boundary conditions on $\Gamma$ can be reduced to
\ben
&&\widetilde u^{\mathrm{sca}}=\widetilde f(x)\quad (:=f(\widetilde x)),
\enn
and
\ben
&&\widetilde\partial_{\nu}\widetilde u^{\mathrm{sca}}=\widetilde g(x)\quad (:=g(\widetilde x)),
\enn
respectively, where $\widetilde\partial_{\nu}=(\mathbf A^\top \nu)\cdot\nabla$. Sommerfeld's radiation condition (\ref{eq:src}) implies that $u^{\mathrm{sca}}$ is outgoing so that the complexified fields $\widetilde u^{\mathrm{sca}}$ and $\widetilde\partial_{\nu}\widetilde u^{\mathrm{sca}}$ decay exponentially as $\left|x\right|\to\infty$. Thus, it is highly accurate to directly assume that $\widetilde u^{\mathrm{sca}}=0$ and $\widetilde\partial_{\nu}\widetilde u^{\mathrm{sca}}=0$ on $\Gamma^+$.

\subsection{Boundary integral equations}
\label{sec3.3}

As shown in~\cite{LS01}, the fundamental solution of (\ref{RCHE}), called PML-transformed free-space Green's function, takes the form
\ben
\widetilde G(x,y)=G(\widetilde x,\widetilde y)=\left\{ {\begin{array}{ll}
		\frac{i}{4}H^{(1)}_0(k\rho(\widetilde x,\widetilde y)), & d=2,\\
		\frac{{\rm exp}\left(ik\rho(\widetilde x,\widetilde y)\right)}{4\pi\rho(\widetilde x,\widetilde y)},& d=3,
\end{array}} \right.
\enn
where $\rho$ is the complex distance function given by
\be
\label{rho}
\rho(\widetilde x,\widetilde y)=\left(\sum\limits_{j=1}^d \left(\widetilde x_j-\widetilde y_j\right)^2 \right)^{1/2}
\en
and the half-power operator $z^{1/2}$ is chosen to be the branch of $\sqrt z$ with nonnegative real part for $z\in\mathcal \C\backslash \left( -\infty,  \right.\left. 0 \right]$ such that $\mathrm{arg}\sqrt{z}\in (-\frac{\pi}{2},\frac{\pi}{2}]$.

According to~\cite{LLQ18} and noting that we have assumed that $\widetilde u^{\mathrm{sca}}\approx0$ and $\partial_{\nu_c}\widetilde u^{\mathrm{sca}}\approx0$ on $\Gamma^+$, the solution of (\ref{RCHE}) can be approximately represented in the form
\be
\label{DR}
\widetilde u^{\mathrm{sca}}(x)=\int_{\Gamma^b} \left\{ \widetilde G(x,y)\widetilde\partial_{\nu_y}\widetilde u^{\mathrm{sca}}(y)-\widetilde\partial_{\nu_y}\widetilde G(x,y)\widetilde u^{\mathrm{sca}}(y) \right\}ds_y, \quad x\in \Omega^b.
\en
Taking the limits as $x\to\Gamma^b$ and applying the jump conditions, we obtain the corresponding BIEs on $\Gamma^b$
\be
\label{PMLBIED}
&&\left(\frac{1}{2}I+\widetilde K\right)(\widetilde u^{\mathrm{sca}})(x)=\widetilde S(\widetilde\partial_{\nu} \widetilde u^{\mathrm{sca}})(x), \quad x\in\Gamma^b,\\
\label{PMLBIEN}
&&\left(-\frac{1}{2}I+\widetilde K'\right)(\widetilde\partial_{\nu} \widetilde u^{\mathrm{sca}})(x)=\widetilde N(\widetilde u^{\mathrm{sca}})(x), \quad x\in\Gamma^b,
\en
where the boundary integral operators $\widetilde S$, $\widetilde K$, $\widetilde K'$ and $\widetilde N$ are defined by (\ref{SO}), (\ref{KO}), (\ref{KTO}) and (\ref{HO})
with $G$ replaced by $\widetilde G$, $\partial_\nu$ replaced by $\widetilde\partial_{\nu}$ and $\Gamma$ replaced by $\Gamma^b$, respectively.

Analogous to the discussion in Section~\ref{sec2.2}, we use the BIEs
(\ref{PMLBIED}) and (\ref{PMLBIEN}) to solve the corresponding Neumann and
Dirichlet problems, respectively. The given Dirichlet and Neumann data on $\Gamma$ imply the following two BIEs:
\be
\label{PMLBIE1}
\left(\frac{1}{2}I+\widetilde K\right)(\widetilde u^{\mathrm{sca}})(x)=\widetilde S(\widetilde g)(x), \quad x\in\Gamma^b,
\en
and
\be
\label{PMLBIE2}
\left(-\frac{1}{2}I+\widetilde K'\right)(\widetilde\partial_{\nu} \widetilde u^{\mathrm{sca}})(x)=\widetilde N(\widetilde f)(x), \quad x\in\Gamma^b.
\en

\section{Numerical implementation}
\label{sec4}

This section will discuss the numerical discretization of the boundary integral operators $\widetilde S$, $\widetilde K$, $\widetilde K'$ and $\widetilde N$ by means of the Chebyshev-based rectangular-polar integral solver~\cite{BG20,BY20}. Before that, regularized formulations will be derived in the following subsection to treat the hyper-singular operator $\widetilde N$.

\subsection{Regularization of the hyper-singular operator}
\label{sec4.1}

As shown in the following lemma, by analogy to Maue's identity~\cite{K95,M49}, the hyper-singular operator $\widetilde N$ can be reformulated as a combination of weakly-singular integral operators and tangential derivatives.
\begin{lemma}
\label{HSreg}
Assuming that $\varphi=0$ on $\partial\Gamma^b$. In two dimensions, the hyper-singular operator $\widetilde N$ can be expressed in the form
	\ben
	\label{N2D}
	\widetilde{N}(\varphi)(x)=\frac{d}{ds_x}\int_{\Gamma^b}\widetilde{G}(x,y)\frac{d\varphi}{ds_y}ds_y+k^2\int_{\Gamma^b}\nu_x^\top \mathbf A_1(x,y)\nu_y\widetilde{G}(x,y)\varphi(y)ds_y,
	\enn
	where $\mathbf  A_1(x,y)=\mbox{diag}\left\{\alpha_2(x_2)\alpha_2(y_2),\alpha_1(x_1)\alpha_1(y_1) \right\}$ and $\frac{d}{ds}=\nu^{\perp}\cdot\nabla$ denotes the tangential derivative. In three dimensions, we have
	\ben
	\label{N3D}
	\widetilde N(\varphi)(x)= &&\int_{\Gamma^b}  \left(\mathbf A_2(x,y)\left(\nu_x\times \nabla_x\right)\widetilde G(x,y)\right)\cdot\left(\nu_y\times\nabla_y\right)\varphi(y)ds_y\\
&& +k^2\int_{\Gamma^b}\nu_x^\top\mathbf A_3(x,y) \nu_y\widetilde G(x,y)\varphi(y)ds_y,
	\enn
	where
	\ben
 &&\mathbf A_2(x,y)=\mbox{diag}\left\{\alpha_1(x_1)\alpha_1(y_1),\alpha_2(x_2)\alpha_2(y_2),\alpha_3(x_3)\alpha_3(y_3) \right\}, \\
 &&\mathbf A_3(x,y)=\mbox{diag}\left\{\frac{\alpha(x,y)}{\alpha_1(x_1)\alpha_1(y_1)},\frac{\alpha(x,y)}{\alpha_2(x_2)\alpha_2(y_2)}, \frac{\alpha(x,y)}{\alpha_3(x_3)\alpha_3(y_3)}\right\},
 \enn
 with $\alpha(x,y)=\mathop \Pi \limits_{i = 1}^3\alpha_i(x_i)\alpha_i(y_i)$.
\end{lemma}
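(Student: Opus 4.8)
The plan is to recognize $\widetilde N$ as the pullback, under the PML complex stretching, of the \emph{standard} Helmholtz hyper-singular operator, and then reduce the claim to the classical Maue identity together with careful bookkeeping of the geometric factors produced by the change of variables. Write $D(x)=\mathrm{diag}(\alpha_1(x_1),\dots,\alpha_d(x_d))$, so that the chain rule gives $\nabla_x=D(x)\widetilde\nabla_x$, where $\widetilde\nabla_x$ denotes differentiation in the stretched variable $\widetilde x$. Note the two algebraic facts $\mathbf A=J\,D^{-2}$ and $\mathrm{cof}(D)=\det(D)\,D^{-1}=J\,D^{-1}$, where $\mathrm{cof}$ is the cofactor matrix. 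The crucial observation is that the complexified normal derivative is exactly the conormal derivative of the stretched operator: since $\widetilde\partial_\nu=\nu\cdot\mathbf A\nabla_x=\nu\cdot J\,D^{-1}\widetilde\nabla_x$, Nanson's formula $\widetilde\nu\,d\widetilde s=\mathrm{cof}(D)\,\nu\,ds=J D^{-1}\nu\,ds$ yields $\widetilde\partial_\nu=j\,\partial_{\widetilde\nu}$, where $j:=d\widetilde s/ds$ and $\widetilde\nu$ is the unit normal of the stretched surface $\widetilde\Gamma^b$.

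First I would transport the inner integral to the stretched surface. Because $\widetilde\partial_{\nu_y}\widetilde G\,ds_y=\partial_{\widetilde\nu_y}G(\widetilde x,\widetilde y)\,d\widetilde s_y$, the inner potential $\int_{\Gamma^b}\widetilde\partial_{\nu_y}\widetilde G\,\varphi\,ds_y$ equals the standard Helmholtz normal-derivative potential over $\widetilde\Gamma^b$ with density $\varphi$. Applying the outer operator and using $\widetilde\partial_{\nu_x}=j_x\,\partial_{\widetilde\nu_x}$ then gives the compact identity $\widetilde N(\varphi)(x)=j_x\,N_{\widetilde x}(\varphi)(\widetilde x)$, where $N_{\widetilde x}$ is the ordinary Helmholtz hyper-singular operator on $\widetilde\Gamma^b$. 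Next I would invoke the classical Maue identity for $N_{\widetilde x}$; the hypothesis $\varphi=0$ on $\partial\Gamma^b$ is precisely what is needed to discard the boundary term in the underlying tangential (surface-Stokes) integration by parts. Finally I would pull each resulting term back to $x$-coordinates. In the tangential/surface-curl term the Jacobians cancel: in three dimensions the cofactor identity $(\mathbf M a)\times(\mathbf M b)=\mathrm{cof}(\mathbf M)(a\times b)$ applied to $\widetilde\nu\times\widetilde\nabla=(J/j)\,(D^{-1}\nu)\times(D^{-1}\nabla)=j^{-1}D(\nu\times\nabla)$ turns $(\widetilde\nu_x\times\widetilde\nabla_x G)\cdot(\widetilde\nu_y\times\widetilde\nabla_y\varphi)$ into $j_x^{-1}j_y^{-1}\big(\mathbf A_2(\nu_x\times\nabla_x\widetilde G)\big)\cdot(\nu_y\times\nabla_y\varphi)$ with $\mathbf A_2=D(x)D(y)$, so that after multiplying by $j_x$ and integrating against $d\widetilde s_y=j_y\,ds_y$ the factors cancel to leave exactly the stated $\mathbf A_2$ term; in two dimensions the same cancellation, together with $\frac{d\varphi}{d\widetilde s_y}\,d\widetilde s_y=\frac{d\varphi}{ds_y}\,ds_y$ and $\frac{d}{d\widetilde s_x}=j_x^{-1}\frac{d}{ds_x}$, reduces the surface curl to $\frac{d}{ds_x}\int_{\Gamma^b}\widetilde G\,\frac{d\varphi}{ds_y}\,ds_y$. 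In the remaining term, $j_xj_y(\widetilde\nu_x\cdot\widetilde\nu_y)=\nu_x^\top\,\mathrm{cof}(D(x))\,\mathrm{cof}(D(y))\,\nu_y$, and since $\mathrm{cof}(D(x))\mathrm{cof}(D(y))$ equals $\mathbf A_1$ in two dimensions and $\mathbf A_3=J_xJ_y\,D(x)^{-1}D(y)^{-1}$ in three dimensions, this produces precisely the stated $k^2$ term, its sign being fixed by the orientation of $\nu$ and the convention for $G$.

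I expect the main obstacle to be the justification of the classical Maue identity on the \emph{complex} stretched surface $\widetilde\Gamma^b\subset\C^d$, which in the PML region is genuinely non-real and to which the usual real-surface proof does not directly apply. I would handle this either by analytic continuation in the absorbing profiles $\sigma_i$ from the trivial configuration $\sigma_i\equiv 0$ (where $\widetilde\Gamma^b=\Gamma^b$ and the classical identity holds), or---more self-containedly---by redoing the tangential integration by parts directly in the real variable $x$, using the transformed Helmholtz equation $\nabla\cdot(\mathbf A\nabla\widetilde G)=-k^2J\widetilde G$ (valid for $y\neq x$) to generate the $k^2$ term and a surface divergence theorem on $\Gamma^b$ to relocate the derivatives onto $\varphi$. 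A secondary technical point is that, since $\widetilde N$ is defined as a Hadamard finite part, the interchange of the outer derivative with the singular integral and the convergence of the regularized, now weakly singular, integrals must be verified, which again relies on $\varphi=0$ on $\partial\Gamma^b$.
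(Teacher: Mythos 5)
Your proposal is correct in substance but follows a genuinely different route from the paper's proof. You conjugate $\widetilde N$ by the complex stretching: writing $\nabla_x=D(x)\widetilde\nabla_x$ and using the identities $\mathbf A=J\,D^{-2}$, $\mathrm{cof}(D)=J\,D^{-1}$, together with a complexified Nanson formula, you identify $\widetilde\partial_\nu=j\,\partial_{\widetilde\nu}$, recognize $\widetilde N$ as a Jacobian-weighted copy of the \emph{classical} Helmholtz hyper-singular operator on the stretched surface $\widetilde\Gamma^b$, invoke the classical Maue identity there, and pull back. I verified your bookkeeping: $\widetilde\nu\times\widetilde\nabla=j^{-1}D(\nu\times\nabla)$ via the cross-product cofactor identity, and $j_xj_y\,\widetilde\nu_x\cdot\widetilde\nu_y=\nu_x^\top\,\mathrm{cof}(D(x))\,\mathrm{cof}(D(y))\,\nu_y$, which reproduce $\mathbf A_2=D(x)D(y)$, $\mathbf A_3=J_xJ_y D(x)^{-1}D(y)^{-1}$ and, in two dimensions, $\mathbf A_1$ exactly as stated. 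The paper never leaves the real variable: it differentiates $\widetilde\partial_{\nu_y}\widetilde G$ directly, uses the transformed Helmholtz equation $\nabla\cdot(\mathbf A\nabla\widetilde G)+k^2J\widetilde G=0$ for $x\ne y$ to establish the pointwise identity $\mathbf A\nabla_x\widetilde\partial_{\nu_y}\widetilde G=\mathbf C(x,y)(\nu_y\times\nabla_y)\widetilde G+k^2(\cdots)\widetilde G$ with an explicit antisymmetric-type matrix $\mathbf C$, and then applies the Stokes formula on the real surface $\Gamma^b$ (with $\varphi=0$ on $\partial\Gamma^b$ discarding the boundary term) to shift $(\nu_y\times\nabla_y)$ onto $\varphi$. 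Your route buys conceptual transparency---it explains \emph{why} the matrices $\mathbf A_1,\mathbf A_2,\mathbf A_3$ take their forms and treats the two- and three-dimensional cases uniformly---but at the cost of the foundational issue you yourself flag: Maue's identity is classical only on real surfaces, and on $\widetilde\Gamma^b\subset\C^d$ the ``unit'' normal, the complex measure $d\widetilde s$, the branch of the square root defining $j$, and the finite-part definition of the operator all require justification, with analytic continuation in the profiles $\sigma_i$ plausible but nontrivial. Notably, your proposed ``self-contained'' repair---redoing the tangential integration by parts in the real variable using the transformed equation and a surface Stokes theorem---is precisely the paper's argument, so modulo that repair the two proofs coincide; as it stands, your primary route should be regarded as a formal derivation whose rigorous core is the real-variable computation the paper carries out.
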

\begin{proof}
Here we only give the proof of three-dimensional case and the proof of two-dimensional case can be carried out analogously. The hyper-singular operator $\widetilde N$ is given by
\ben
&&\widetilde N(\varphi)(x)=\widetilde\partial_{\nu_x}\int_{\Gamma^b} \widetilde\partial_{\nu_y}\widetilde G(x,y)\varphi(y)ds_y,\quad x\in\Gamma^b.
\enn
Noting that
\ben
&&\widetilde\partial_{\nu_y}\widetilde G(x,y)=\left[\alpha_2(y_2)\alpha_3(y_3)\nu_y^1\partial_{\widetilde {y}_1}+\alpha_1(y_1)\alpha_3(y_3)\nu_y^2\partial_{\widetilde {y}_2}+\alpha_1(y_1)\alpha_2(y_2)\nu_y^3\partial_{\widetilde {y}_3}\right]\widetilde G(x,y),
\enn
and
\ben
\nabla\cdot \left(\mathbf A\nabla \widetilde G(x,y)\right)+k^2 J\widetilde G(x,y)=0,\quad x\ne y,
\enn
it can be derived that
\ben
&&\mathbf A\nabla_x\widetilde\partial_{\nu_y}\widetilde G(x,y)\\
&&=\mathbf C(x,y)\left(\nu_y\times\nabla_y\right)\widetilde  G(x,y)+k^2\begin{bmatrix}
	\alpha_2(x_2)\alpha_2(y_2)\alpha_3(x_3)\alpha_3(y_3)\nu_y^1\\
	\alpha_1(x_1)\alpha_1(y_1)\alpha_3(x_3)\alpha_3(y_3)\nu_y^2\\
	\alpha_1(x_1)\alpha_1(y_1)\alpha_2(x_2)\alpha_2(y_2)\nu_y^3
\end{bmatrix}\widetilde  G(x,y).
\enn
where
\ben
&&\mathbf C(x,y)=\\
&&\begin{bmatrix}
	0&\alpha_2(x_2)\alpha_3(x_3)\alpha_2(y_2)\partial_{\widetilde {x}_3}&-\alpha_2(x_2)\alpha_3(x_3)\alpha_3(y_3)\partial_{\widetilde {x}_2}\\
	-\alpha_1(x_1)\alpha_3(x_3)\alpha_1(y_1)\partial_{\widetilde {x}_3}&0&\alpha_1(x_1)\alpha_3(x_3)\alpha_3(y_3)\partial_{\widetilde {x}_1}\\
	\alpha_1(x_1)\alpha_2(x_2)\alpha_1(y_1)\partial_{\widetilde {x}_2}&-\alpha_1(x_1)\alpha_2(x_2)\alpha_2(y_2)\partial_{\widetilde {x}_1}&0
\end{bmatrix}.
\enn
Then the Stokes formula yields
\ben
N(\varphi)(x)=&&\int_{\Gamma^b} \begin{bmatrix}
	\alpha_1(x_1)\alpha_1(y_1)\left(\nu_x^2\partial_{x_3}-\nu_x^3\partial_{x_2}\right)\\
	\alpha_2(x_2)\alpha_2(y_2)\left(\nu_x^3\partial_{x_1}-\nu_x^1\partial_{x_3}\right)\\
	\alpha_3(x_3)\alpha_3(y_3)\left(\nu_x^1\partial_{x_2}-\nu_x^2\partial_{x_1}\right)
\end{bmatrix}^\top\widetilde G(x,y)\left(\nu_y\times\nabla_y\right)\varphi(y)ds_y\\
&&+k^2\int_{\Gamma^b}\nu_x^\top\begin{bmatrix}
	\alpha_2(x_2)\alpha_2(y_2)\alpha_3(x_3)\alpha_3(y_3)\nu_y^1\\
	\alpha_1(x_1)\alpha_1(y_1)\alpha_3(x_3)\alpha_3(y_3)\nu_y^2\\
	\alpha_1(x_1)\alpha_1(y_1)\alpha_2(x_2)\alpha_2(y_2)\nu_y^3
\end{bmatrix}\widetilde G(x,y)\varphi(y)ds_y\\
=&&\int_{\Gamma^b} \left(\mathbf A_2(x,y)\left(\nu_x\times \nabla_x\right)\widetilde G(x,y)\right)\cdot\left(\nu_y\times\nabla_y\right)\varphi(y)ds_y\\
&& +k^2\int_{\Gamma^b}\nu_x^\top\mathbf A_3(x,y) \nu_y\widetilde G(x,y)\varphi(y)ds_y,
\enn
and this completes the proof.
\end{proof}

\begin{remark}
For the numerical discretization of (\ref{PMLBIE2}), it requires to treat the term $\widetilde N(\widetilde f)$.
\begin{itemize}
\item For the case of a plane incident wave, it is known that $f=0$ on $\Pi$ and thus, $\widetilde f=0$ on $\partial\Gamma^b$. Hence, the regularized formulations shown in Lemma~\ref{HSreg} holds exactly.
\item For the case of a point source, $f=-G(x,z)$ for $x\in\Gamma$. Noting that $G(x,z)$ is an outgoing wave, the complex coordinate transform indicates that $G(\widetilde x,z)$ decays exponentially as $|x|\rightarrow\infty$. Thus, we have $\widetilde f\approx 0$ on $\partial\Gamma^b$. Then the regularized formulations shown in Lemma~\ref{HSreg} can be viewed as an approximation of $\widetilde N(\widetilde f)$.
\end{itemize}
\end{remark}

\subsection{Chebyshev-based rectangular-polar solver}
\label{sec4.2}

Relying on the regularization of the hyper-singular operator proposed in the previous subsection, the discretizations of the BIOs $\widetilde S$, $\widetilde K$, $\widetilde K'$ and $\widetilde N$ can be degenerated into the discretization of
\begin{itemize}
\item[(i)] the integral operators of the form
\be
\label{H}
\mathcal H(\varphi)(x)=\int_{\Gamma^b}H(x,y)\varphi(y)ds_y,\quad x\in\Gamma^b,
\en
in which the kernel $H(x,y)$ is piece-wise weakly-singular;
\item[(ii)] the tangential derivative operators $\frac{d}{ds}$ and $\nu\times\nabla$.
\end{itemize}

In this work, the Chebyshev-based rectangular-polar solver and the Chebyshev-based differentiation algorithm proposed in~\cite{BG20,BY20} will be applied for numerical evaluations of (i) and (ii), respectively. For simplicity, we only give a brief description for the three-dimensional case. For the two-dimensional case, we refer to~\cite{BY21}.

Let the surface $\Gamma^b$ be partitioned into a set of non-overlapping parameterized patches $\Gamma_q$, $q=1,...,M$ as
\ben
\Gamma^b=\bigcup_{q=1}^M \Gamma_q, \quad \Gamma_q=\left\{ x=\textbf{r}^q(u,v):\left[-1,1\right]^2\to \R^3 \right\}.
\enn
Then the integrals $\mathcal{H}\varphi(x)$ over $\Gamma^b$ can be split into the sum of integrals over each of the $M$ patches,
\ben
\mathcal{H}(\varphi)(x)=\sum\limits_{q=1}^{M} \mathcal{H}_q(x),\quad \mathcal{H}_q(x):=\int_{\Gamma_q}H(x,y)\varphi(y)ds_y,\quad x\in \Gamma^b.
\enn
Denoting by $u_j,v_j\in[-1,1]$ ($j=0,\cdots,N-1$) the $N$ Chebyshev points
\ben
u_j=\cos\left(\frac{2j+1}{2N}\pi\right),\quad v_j=\cos\left(\frac{2j+1}{2N}\pi\right),\quad j=0,\cdots,N-1,
\enn
we can define the discretization points on each $\Gamma_q$ by
\ben
x_{ij}^q=\textbf{r}^q(u_i,v_j),\quad i,j=0,\cdots,N-1.
\enn
Given a density $\varphi$, it can then be approximated on $\Gamma_q$ by the Chebyshev polynomials as
\ben
\varphi (x) \approx \sum\limits_{i,j = 0}^{N-1} \varphi _{ij}^q a_{ij}(u,v)=\sum\limits_{i,j = 0}^{N-1} \varphi \left(x_{ij}^q\right) a_{ij}(u,v), \quad x\in \Gamma_q,
\enn
where
\ben
a_{ij}(u,v)=\frac{1}{N^2}\sum^{N-1}_{m,n=0} \alpha_n\alpha_mT_n(u_i)T_m(v_j)T_n(u)T_m(v),\quad {\alpha _n} = \begin{cases}
	1, & n=0,\cr
	2, & n\neq 0.
\end{cases}
\enn

The strategy for evaluating the integral $\mathcal H_q(x)$ depends on the distance between the point $x$ and the $q$-th patch $\Gamma_q$. Define the distance
\ben
{\rm dist}_{x,\Gamma_q}:=\mathop{{\rm min}}\limits_{(u,v)\in[-1,1]^2}\{|x-\textbf{r}^q(u,v)|\}
\enn
and the value
\ben
(\widetilde{u}^q,\widetilde{v}^q)=\mathop{{\rm arg}{\rm min}}\limits_{(u,v)\in [-1,1]^2}\{|x-\textbf{r}^q(u,v)|\}.
\enn
Let $\delta>0$ be a given tolerance, where in our settings, $\delta=0.1$. If ${\rm dist}_{x,\Gamma_q}>\delta$, called ``non-adjacent" integration case, the integral is smooth, which can be accurately by means of Fej\'er's first quadrature rule. For the ``adjacent" integration case (i.e., ${\rm dist}_{x,\Gamma_q}\le\delta$), the integrals $\mathcal{H}_q(x)$ will be nearly/completely singular. In order to tackle this difficulty, we construct a new graded mesh, by means of appropriate ``rectangular-polar" changes of variables
\ben
\xi_\alpha(t)=\begin{cases}
		{\rm sgn}(t)-({\rm sgn}(t)-\alpha)\chi_p(1-|t|), &\alpha\ne\pm 1,\cr
		-1+2\chi_p(\frac{|t+1|}{2}),&\alpha=1,\cr
		1-2\chi_p(\frac{|t-1|}{2}),&\alpha=-1,
	\end{cases}\quad t\in[-1,1],
\enn
where$-1\le\alpha\le1$, and for a given integer $p$, $\chi_p$ is given by
\ben
\chi_p(s)=2\frac{[\eta_p(s)]^p}{[\eta_p(s)]^p+[\eta_p(-s)]^p}-1,\quad -1\le s\le 1,
\enn
with
\ben
\eta_p(s)=\left(\frac{1}{2}-\frac{1}{p}\right)s^3+\frac{1}{p}s+\frac{1}{2}.
\enn
The changes of variables, with both $\alpha=\widetilde{u}^q$ and $\alpha=\widetilde{v}^q$ can be used to produce a refinement around the points $\widetilde{u}^q$ and $\widetilde{v}^q$. Then applying the Chebyshev expansion of $\varphi$, we have
\ben
\mathcal{H}_q(x)\approx \sum\limits_{m,n=0}^{N-1} A^{q}_{nm} \varphi_{nm}^q,
\enn
where
\ben
&&A_{nm}^{q}=\sum\limits_{l_1.l_2=0}^{N_\beta-1}H(x,\textbf{r}^q(\xi_{\widetilde{u}^q}(\widetilde t_{l_1}),\xi_{\widetilde{v}^q}(\widetilde t_{l_2})))J^q(\xi_{\widetilde{u}^q}(\widetilde t_{l_1}),\xi_{\widetilde{v}^q}(\widetilde t_{l_2}))\\
&&\qquad \qquad \qquad\quad\times a_{nm}(\xi_{\widetilde{u}^q}(\widetilde t_{l_1}),\xi_{\widetilde{v}^q}(\widetilde t_{l_2}))\xi_{\widetilde{u}^q}^{'}(\widetilde{t}_{l_1})\xi_{\widetilde{v}^q}^{'}(\widetilde{t}_{l_2})\widetilde{w}_{l_1}\widetilde{w}_{l_2},
\enn
with the quadrature nodes and weights given by
\ben
\widetilde t_j=\cos \left(\frac{2j+1}{2N_\beta}\pi\right),\quad j=0,...,N_\beta-1,
\enn
and
\ben
\widetilde w_j=\frac{2}{N_\beta}\left(1-2\sum\limits_{l=1}^{\left\lfloor N_\beta/2 \right\rfloor }\frac{1}{4l^2-1}\cos(l\widetilde t_j)\right), \quad j=0,...,N_\beta-1.
\enn

Finally, we describe the approximations of the tangential derivative operator $\frac{d}{ds}$ and $\nu\times\nabla$. Note that $\nu\times\nabla=\nu\times\nabla^S$ where $\nabla^S$ denotes the surface gradient. On each patch $\Gamma_q$, the quantities $\frac{d\varphi}{ds}$ and $\nabla^S\varphi$ can be easily evaluated by means of term-by-term differentiation of the Chebyshev expansion of $\varphi$. In two dimensions, we can obtain that
\ben
\frac{d\varphi}{ds}(\textbf{r}^q(u))=\sum\limits_{i=0}^{N-1}\left|\frac{d\textbf{r}^q(u)}{du} \right|^{-1} \frac{da_i(u)}{du}\varphi_i^q.
\enn
In three dimensions, the evaluation of the tangential-derivative operator $\nu\times\nabla^S$ can be achieved by using the expressions of $\nabla^S$ on parameterized curve, see~\cite[Section 4.2.2]{BY20}.

\begin{remark}
If corners or edges exist on $\Gamma^b$, then the unknowns in the integral equations have singularities at the corners or edges. To resolve the singularities, we utilize a change of variables whose derivatives vanish at corners or along edges. A change of variables can be devised on the basis of mappings $\chi_p(s)$, whose derivatives up to order $p-1$ vanish at the endpoints. Then the function $\chi_p(s)$ can be used to construct a change of variables to accurately resolve field singularities at the corners or edges (for more details, see~\cite{BG20,CK98}).
\end{remark}

\subsection{Layered-medium scattering problems}
\label{sec4.3}

\begin{figure}[htb]
	\centering
	\begin{tabular}{cc}
		\includegraphics[scale=0.2]{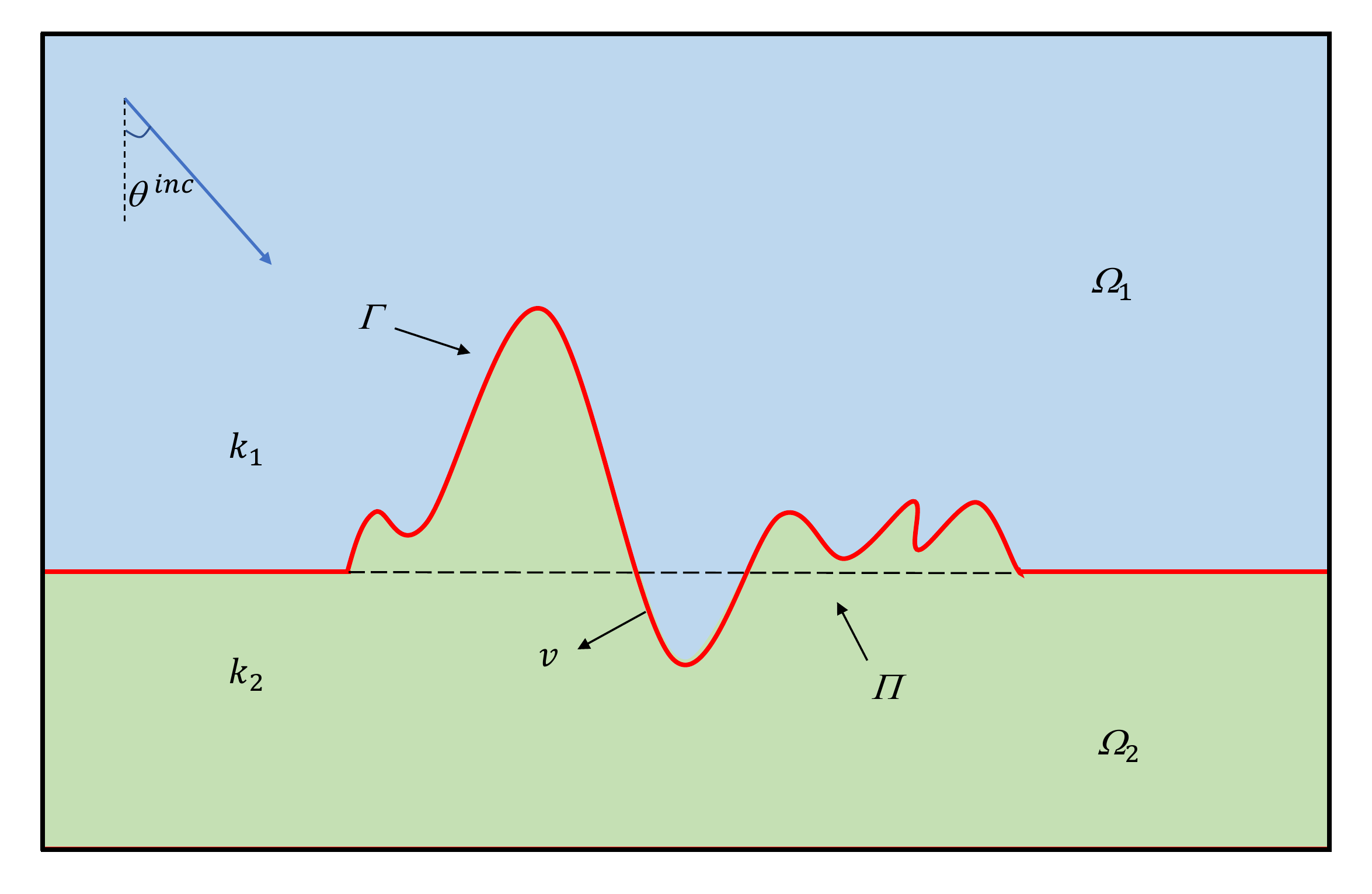}
	\end{tabular}
	\caption{Description of the problem under consideration: scattering by a defect on a penetrable layer. $\Gamma$ denotes the interface between the two media, and $\Phi$ denotes the interface between the upper and lower half-planes.}
	\label{TLP}
\end{figure}

To conclude this section, we briefly discuss how to the new PML-based BIE
solver be extended to layered-medium scattering problems. Without loss of generality, we focus on the acoustic scattering problems in a two-layer
medium in two dimensions, see Fig.~\ref{TLP}. The method proposed in~\cite{LLQ18} uses the Neumann-to-Dirichlet map to construct the
boundary integral equations. Alternatively, in this work, we utilize the second-kind system of integral equations derived in~\cite{KK75} which covers all four BIOs (\ref{SO})-(\ref{HO}).

Analogous to Section~\ref{sec2.1}, let $u^\mathrm{inc}$ be a plane incident wave
in $\Omega^1$ given in (\ref{PPW}) and $u_{j,\mathrm{ref}}^{\mathrm{sca}}$ be the reference scattered fields in $\Omega_j, j=1,2$, resulting from the
scattering of the plane wave $u^\mathrm{inc}$ by the flat surface $\Pi$; see~\cite{LLQ18} for more details. Then, the acoustic scattering problems can be formulated as follows: the scattered fields $u_j^{\rm sca}, j=1,2$ satisfy the Helmholtz equation
\ben
\Delta u_j^{\rm sca}+k_j^2u_j^{\rm sca}=0\quad\mbox{in}\quad \Omega_j,\quad j=1,2,
\enn
where $k_j$ is the wavenumber in $\Omega_j$, and the transmission conditions on $\Gamma$ are
\ben
&u^{\rm sca}_1|_\Gamma-u^{\rm sca}_2|_\Gamma= f (:=-u^{\rm inc}|_\Gamma-u_{1,\mathrm{ref}}^{\mathrm{sca}}|_\Gamma+u_{2,\mathrm{ref}}^{\mathrm{sca}}|_\Gamma),\\
&\partial_\nu u^{\rm sca}_1|_\Gamma-\partial_\nu u^{\rm sca}_2|_\Gamma= g
(:=-\partial_\nu u^{\rm inc}|_\Gamma-\partial_\nu u_{1,\mathrm{ref}}^{\mathrm{sca}}|_\Gamma+ \partial_\nu u_{2,\mathrm{ref}}^{\mathrm{sca}}|_\Gamma).
\enn
They lead to the following integral equation system
\ben
\begin{bmatrix}
	I+K_1-K_2& S_2-S_1\\
	N_1-N_2&I+K'_2-K'_1
\end{bmatrix}\begin{bmatrix}
	u^{\rm sca}_2|_\Gamma\\
	\partial_{\nu} u^{\rm sca}_2|_\Gamma
\end{bmatrix}=\begin{bmatrix}
	-f\\
	-g
\end{bmatrix},
\enn
where the subscripts $1$ and $2$ in the BIOs indicate that they are defined for the wave numbers $k_1$ and $k_2$, respectively. Applying the PML stretching and assuming that the corresponding solutions $\widetilde u^{\rm sca}_j, j=1,2$ to the PML truncated problems vanish on the outer boundary of PML region, we can obtain the reduced BIEs:
\ben
\begin{bmatrix}
	I+\widetilde K_1-\widetilde K_2&\widetilde S_2-\widetilde S_1\\
	\widetilde N_1-\widetilde N_2&I+\widetilde K'_2-\widetilde K'_1
\end{bmatrix}\begin{bmatrix}
	\widetilde u^{\rm sca}_2|_\Gamma\\
	\widetilde \partial_{\nu}\widetilde u^{\rm sca}_2|_\Gamma
\end{bmatrix}=\begin{bmatrix}
	-\widetilde f\\
	-\widetilde g
\end{bmatrix}.
\enn
Numerical schemes in the previous two subsections can be used to directly discretize the integral operators so as to obtain $\widetilde{u}_2^{\rm sca}|_{\Gamma}$ and $\widetilde \partial_{\nu}\widetilde u^{\rm sca}_2|_\Gamma$ numerically.

\begin{figure}[htb]
	\centering
	\begin{tabular}{cc}
		\includegraphics[scale=0.18]{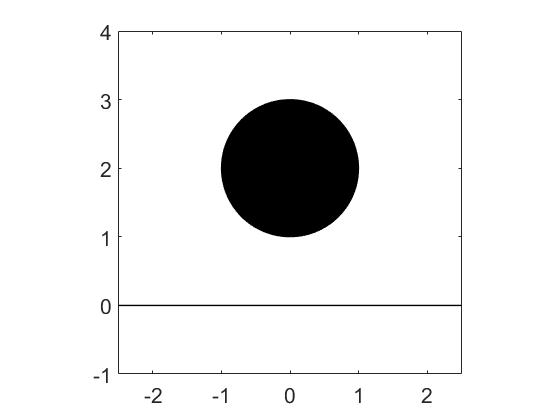} &
		\includegraphics[scale=0.18]{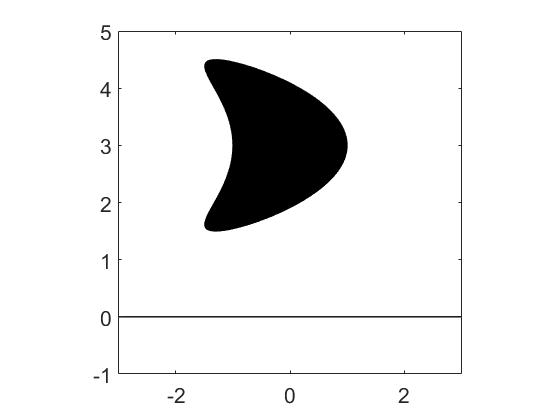} \\
		(a) disc within half-space & (b) kite within half-space
	\end{tabular}
	\caption{Two half-space in two dimensions considered in this paper.}
	\label{2Dob}
\end{figure}

\section{Numerical experiments}
\label{sec5}

In this section, we present a variety of numerical results to demonstrate the accuracy and efficiency of the proposed PML-based BIE methods for solving two- and three-dimensional acoustic half-space problems. Solutions of the various integral equations were produced by means of the fully complex version of the iterative solver GMRES with residual tolerance $\epsilon_r$ as specified in each case and the relative maximum error is defined by
\be
\label{RE}
\epsilon_{\infty}&:=\frac{\mbox{max}_{x\in \Gamma_{\mathrm{test}}}{\left|u^{\rm num}(x)-u^{\rm ref}(x)\right|}}{\mbox{max}_{x\in \Gamma_{\mathrm{test}}}{\left|u^{\rm ref}(x)\right|}},
\en
where $u^{\rm ref}$ is produced through evaluation of exact solutions, when available, or by means of numerical solution with sufficiently fine discretizations, and where $\Gamma_{\mathrm{test}}$ is a suitably selected line segment (2D) or square plane (3D) above the defect. In all cases, we choose $N^\beta=200$, $P=6$, $S=6$ and the PML thickness $T_i$ is set to be $T_i=2\lambda=2\times\frac{2\pi}{k}, i=1,\cdots,d$, \textcolor{r1}{where $\lambda$ denotes the free-space wavelength}. All of the numerical results presented in this paper were obtained by means of Fortran implementations, parallelized using OpenMP.

\begin{figure}[htb]
	\centering
	\begin{tabular}{cc}
		\includegraphics[scale=0.2]{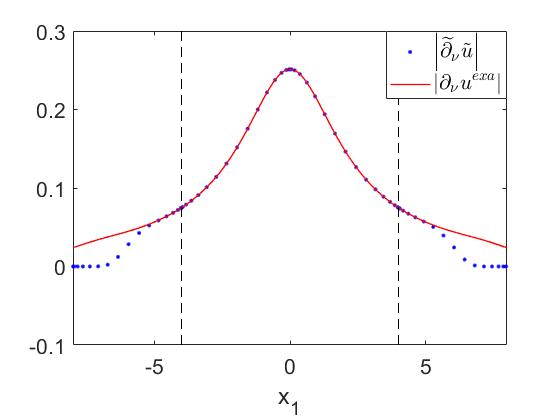} &
		\includegraphics[scale=0.2]{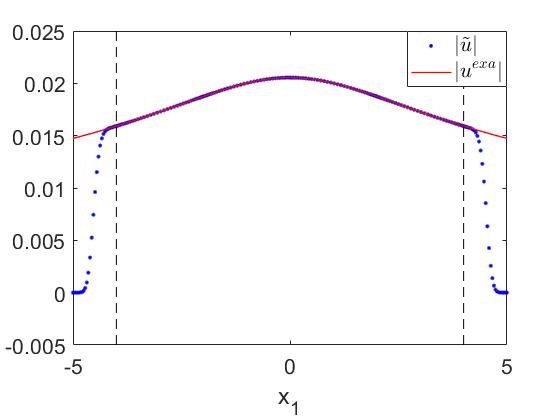} \\
		(a) Dirichlet problem ($k=\pi$) &(b) Neumann problem ($k=10\pi$)   \\
	\end{tabular}
	\caption{(a) Absolute values of $ \widetilde\partial_{\nu}\widetilde u^\mathrm{sca}$ and $\partial_\nu u^{\rm exa}$ on \textcolor{r1}{$\partial\Omega^b\cap\Pi$} for Dirichlet problem with $k=\pi$; (b) Absolute values of $ \widetilde u^\mathrm{sca}$ and $ u^{\rm exa}$ on \textcolor{r1}{$\partial\Omega^b\cap\Pi$} with $k=10\pi$; dashed lines separate \textcolor{r1}{$\partial\Omega_{\rm{PML}}\cap\Pi$} and \textcolor{r1}{$\partial\Omega_{\rm{PHY}}\cap\Pi$}.}
	\label{2DDN}
\end{figure}

\begin{figure}[htb]
	\centering
	\begin{tabular}{cc}
		\includegraphics[scale=0.2]{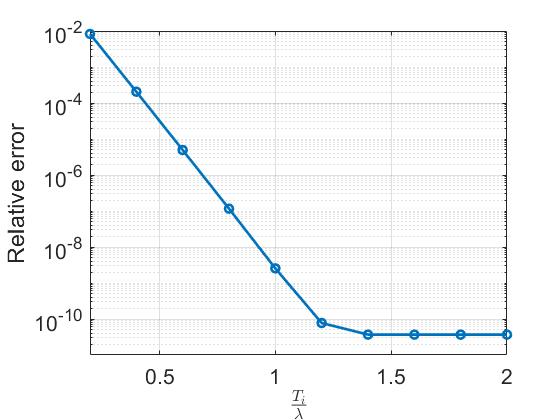} &
		\includegraphics[scale=0.2]{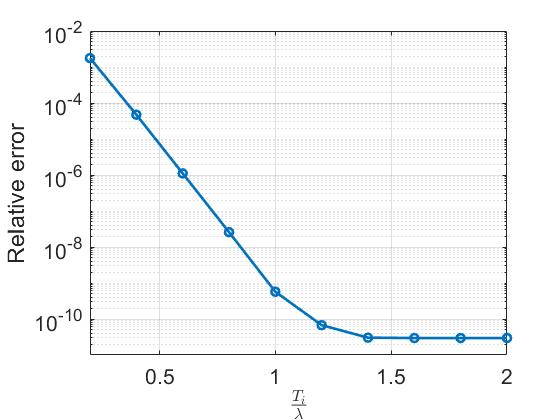} \\
		(a) Dirichlet problem & (b) Neumann problem \\
	\end{tabular}
	\caption{Numerical errors $\epsilon_{\infty}$ with respect to $\frac{T_i}{\lambda}$}
	\label{RET2d}
\end{figure}

\begin{table}[htb]
	\centering
	\caption{Numerical errors $\epsilon_{\infty}$ for the Dirichlet and Neumann problems of scattering by a disc-shaped obstacle within a half-space.}
	\begin{tabular}{|c|c|c|c|c|c|c|}
		\hline
		\multirow{2}{*}{$k$}&\multirow{2}{*}{$N$} & \multirow{2}{*}{$N_{\rm DOF}$}  & \multicolumn{2}{|c|}{Dirichlet problem}& \multicolumn{2}{|c|}{Neumann problem} \\
		\cline{4-7}
		&& & $N_{iter}$ & $\epsilon_{\infty}$ & $N_{iter}$&$\epsilon_{\infty}$ \\
		\hline
		&16&$6\times16$&16&$3.92\times10^{-3}$&12&$5.42\times 10^{-5}$\\
   $\pi$&32&$6\times32$&13&$1.61\times10^{-6}$&12&$2.66\times 10^{-8}$\\
		&64&$6\times64$&13&$2.29\times10^{-11}$&12&$7.64\times 10^{-13}$\\
		\hline
		&32&$12\times32$&48&$5.81\times 10^{-3}$&46&$2.17\times 10^{-4}$\\
 $10\pi$&48&$12\times48$&47&$5.10\times10^{-4}$&44&$1.63\times 10^{-5}$\\
		&64&$12\times64$&46&$4.69\times10^{-8}$&45&$1.93\times 10^{-9}$\\
		\hline
	\end{tabular}
\label{2Ddisc}
\end{table}

\begin{table}[htb]
	\centering
	\caption{Numerical errors $\epsilon_{\infty}$ for the Dirichlet and Neumann problems of scattering by a kite-shaped obstacle within a half-space.}
	\begin{tabular}{|c|c|c|c|c|c|c|}
		\hline
		\multirow{2}{*}{$k$}&\multirow{2}{*}{$N$} & \multirow{2}{*}{$N_{\rm DOF}$}  & \multicolumn{2}{|c|}{Dirichlet problem}& \multicolumn{2}{|c|}{Neumann problem} \\
		\cline{4-7}
		&& & $N_{iter}$ & $\epsilon_{\infty}$ & $N_{iter}$&$\epsilon_{\infty}$ \\
		\hline
		&16&$10\times16$&22&$3.84\times10^{-3}$&21&$6.65\times10^{-4}$ \\
   $\pi$&32&$10\times32$&22&$1.53\times10^{-6}$&21&$2.09\times10^{-7}$  \\
		&64&$10\times64$&22&$1.00\times10^{-10}$&21&$3.90\times10^{-11}$  \\
		\hline
        &32&$15\times32$&108&$1.69\times10^{-2}$&104&$1.30\times 10^{-3}$ \\
 $10\pi$&48&$15\times48$&108&$1.52\times10^{-4}$&104&$2.58\times 10^{-5}$ \\
		&64&$15\times64$&108&$5.49\times10^{-8}$&104&$3.31\times 10^{-9}$ \\
		\hline
	\end{tabular}
\label{2Dkite}
\end{table}

\begin{figure}[htb]
	\centering
	\begin{tabular}{cc}
		\includegraphics[scale=0.2]{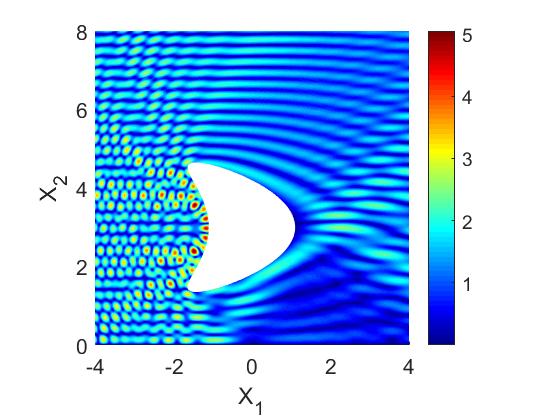} &
		\includegraphics[scale=0.2]{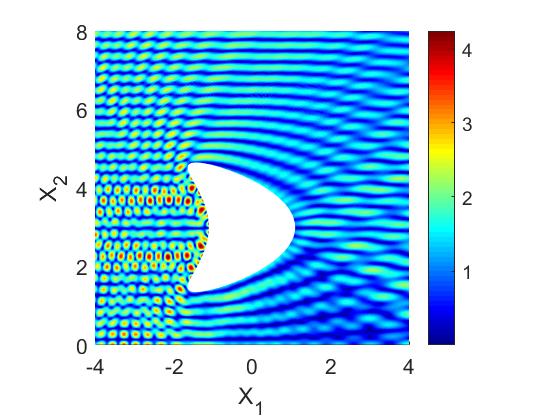} \\
		(a) Dirichlet problem &(b) Neumann problem \\
	\end{tabular}
	\caption{Absolute values of the total field for the Dirichlet and Neumann problems of scattering of a plane pressure wave by a kite-shaped obstacle, where $k=5\pi$.}
	\label{kitescat}
\end{figure}

\begin{figure}[htb]
	\centering
	\begin{tabular}{cc}
		\includegraphics[scale=0.2]{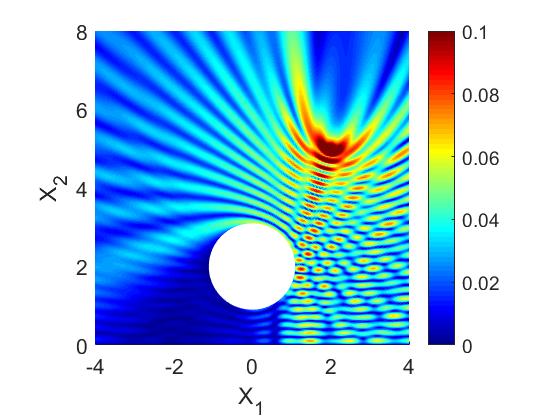} &
		\includegraphics[scale=0.2]{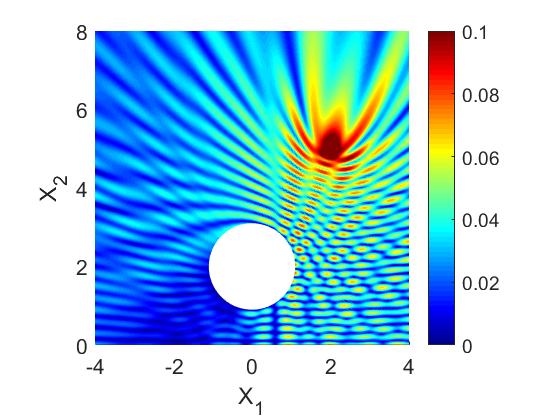} \\
		(a) Dirichlet problem &(b) Neumann problem \\
	\end{tabular}
	\caption{Absolute values of the total field for the Dirichlet and Neumann problems of scattering of an incident point source by a disc-shaped obstacle, where $k=5\pi$.}
	\label{discscat}
\end{figure}

\begin{figure}[htb]
	\centering
	\begin{tabular}{ccc}
		\includegraphics[scale=0.1]{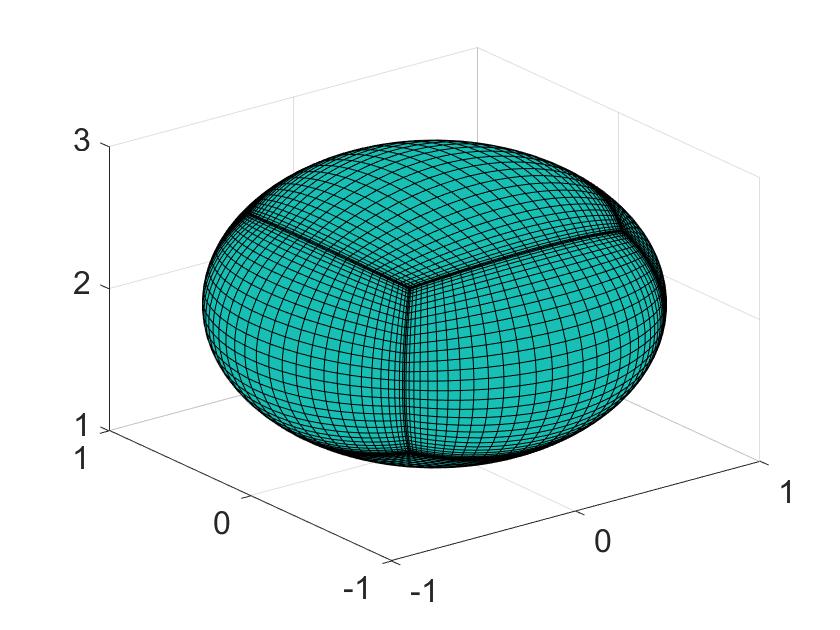} &
		\includegraphics[scale=0.1]{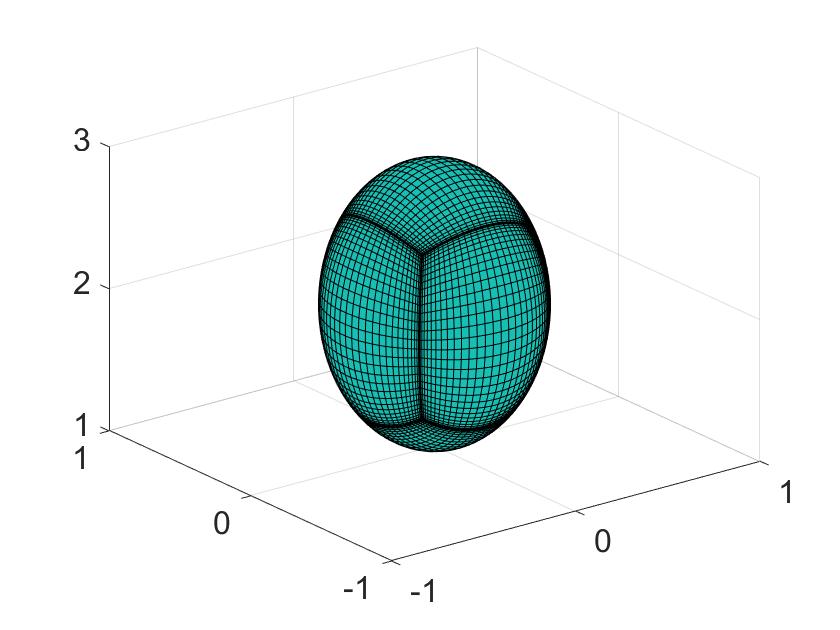} &
		\includegraphics[scale=0.1]{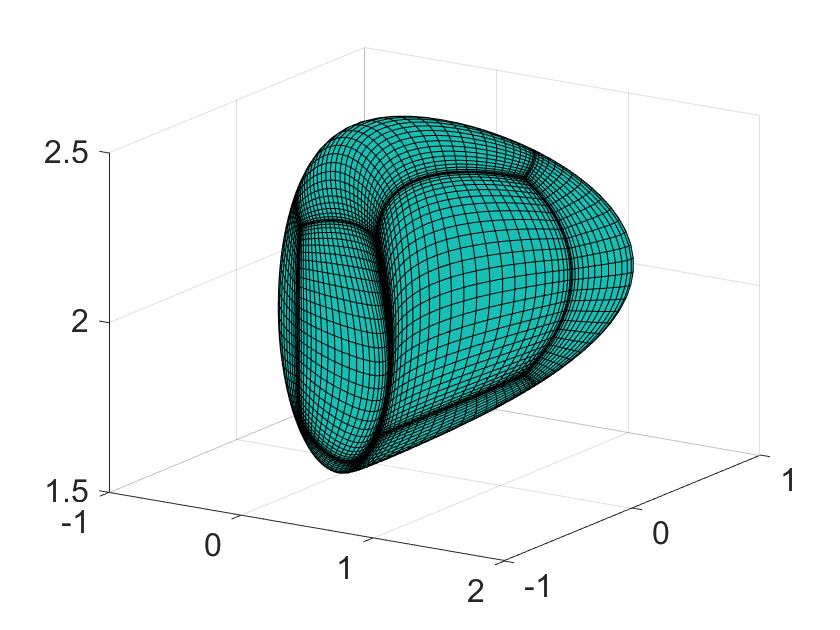} \\
		(a) Ball &(b) Ellipsoid&(c) Bean \\
	\end{tabular}
	\caption{Obstacles used in the numerical tests presented in Section~\ref{sec5.2}.}
	\label{3Dob}
\end{figure}

\begin{figure}[htb]
	\centering
	\begin{tabular}{cc}
		\includegraphics[scale=0.2]{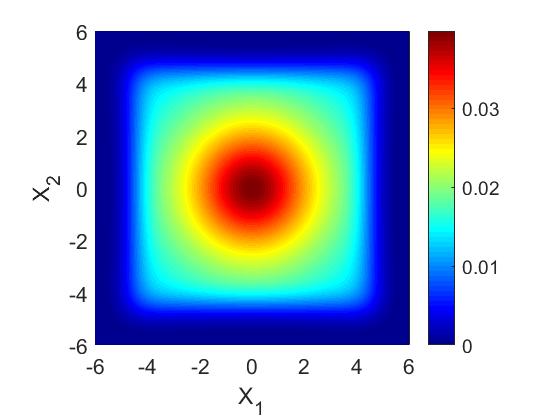} &
		\includegraphics[scale=0.2]{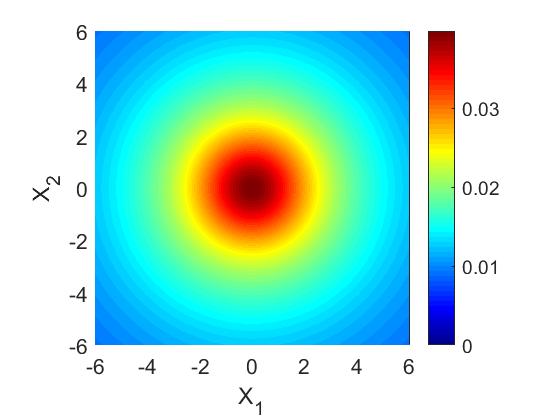} \\
		(a) $\left|\widetilde\partial_{\nu}\widetilde u^\mathrm{sca}\right|$ &(b) $\left|\partial_\nu u^{\rm exa}\right|$ \\
		\includegraphics[scale=0.2]{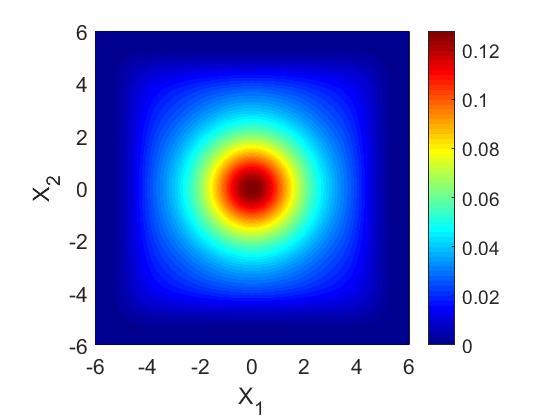} &
		\includegraphics[scale=0.2]{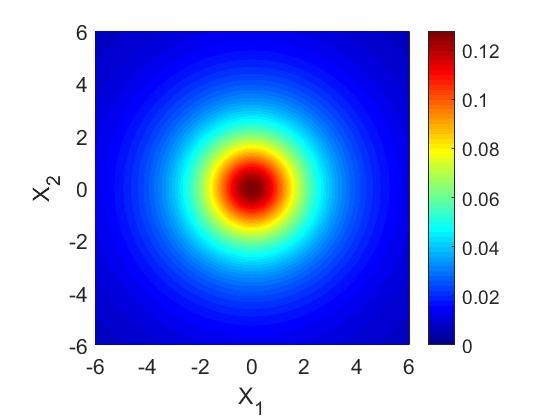} \\
		(c) $\left|\widetilde u^\mathrm{sca}\right|$ &(d) $\left|u^{\rm exa}\right|$ \\
	\end{tabular}
	\caption{(a)(b): Absolute values of the $\widetilde\partial_{\nu}\widetilde u^\mathrm{sca}$ and $\partial_\nu u^{\rm exa}$ for the Dirichlet problem; (c)(d): Absolute values of the $\widetilde u^\mathrm{sca}$ and $u^{\rm exa}$ for the Neumann problem. Here, $k=\pi$.}
	\label{3Dpi}
\end{figure}

\begin{figure}[htb]
	\centering
	\begin{tabular}{cc}
		\includegraphics[scale=0.2]{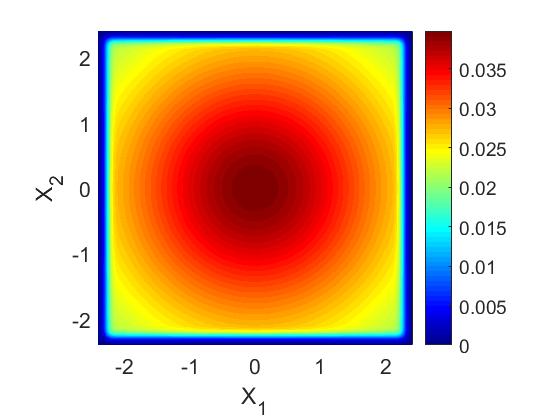} &
		\includegraphics[scale=0.2]{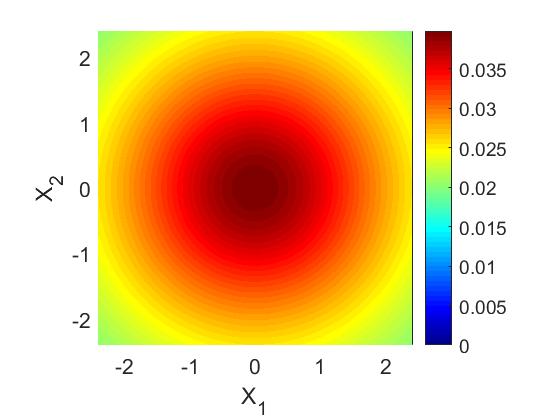} \\
		(a) $\left|\widetilde\partial_{\nu}\widetilde u^\mathrm{sca}\right|$ &(b) $\left|\partial_\nu u^{\rm exa}\right|$ \\
		\includegraphics[scale=0.2]{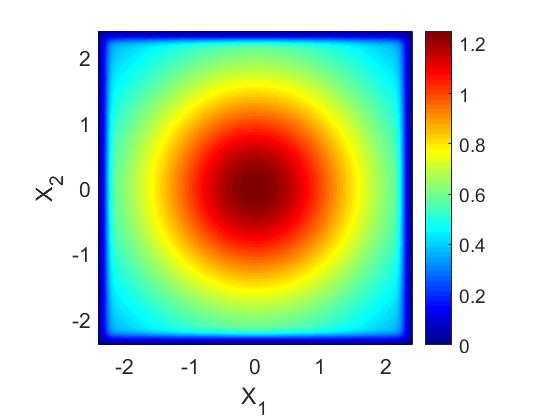} &
		\includegraphics[scale=0.2]{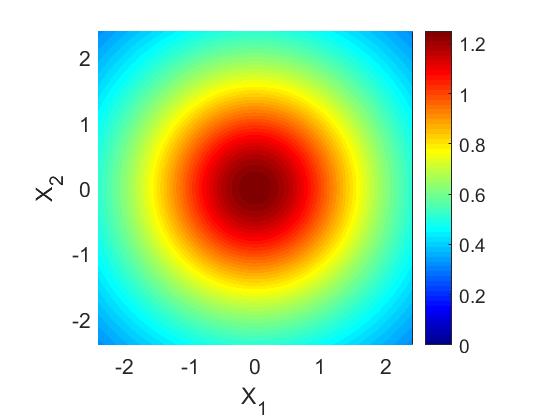} \\
		(c) $\left|\widetilde u^\mathrm{sca}\right|$ &(d) $\left|u^{\rm exa}\right|$ \\
	\end{tabular}
	\caption{(a)(b): Absolute values of the $\widetilde \partial_{\nu}\widetilde u^\mathrm{sca}$ and $\partial_\nu u^{\rm exa}$ for the Dirichlet problem; (c)(d): Absolute values of the $\widetilde u^\mathrm{sca}$ and $u^{\rm exa}$ for the Neumann problem. Here, $k=10\pi$.}
	\label{3D10pi}
\end{figure}

\begin{figure}[htb]
	\centering
	\begin{tabular}{cc}
		\includegraphics[scale=0.2]{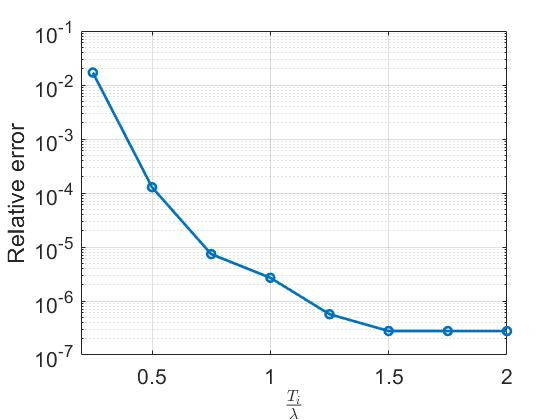} &
		\includegraphics[scale=0.2]{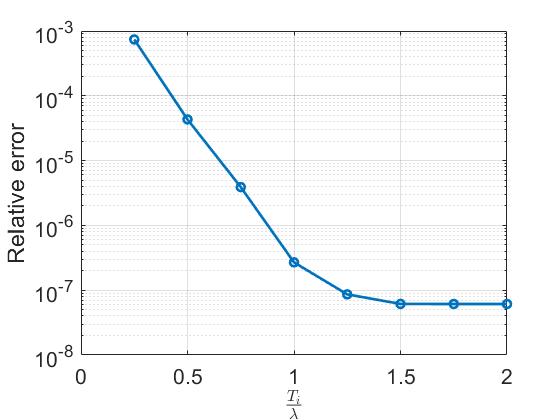} \\
		(a) Dirichlet problem & (b) Neumann problem \\
	\end{tabular}
	\caption{Numerical errors $\epsilon_{\infty}$ with respect to $\frac{T_i}{\lambda}$}
	\label{RET3d}
\end{figure}

\begin{figure}[htb]
	\centering
	\begin{tabular}{cc}
		\includegraphics[scale=0.2]{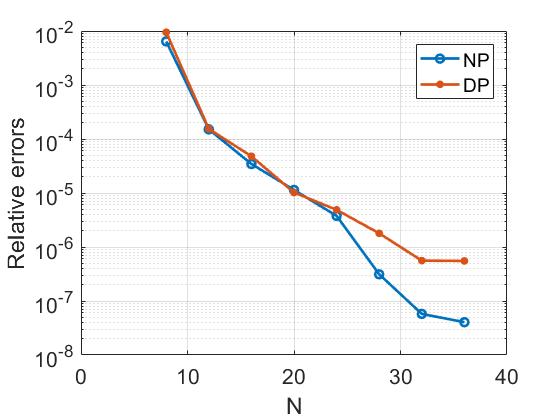} &
		\includegraphics[scale=0.2]{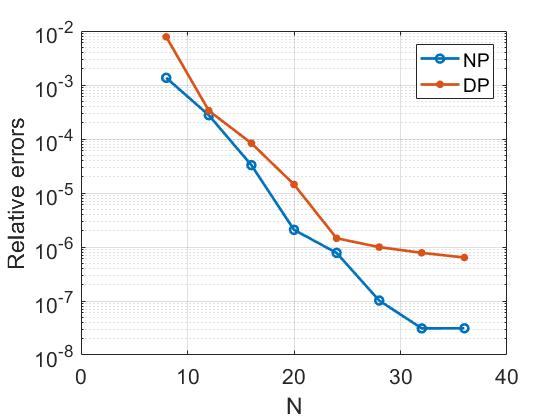} \\
		(a) Obstacle Fig.~\ref{3Dob}(a) & (b) Obstacle Fig.~\ref{3Dob}(b) \\
	\end{tabular}
	\caption{Numerical errors for the Dirichlet and Neumann problems of scattering by the obstacles given in Fig.~\ref{3Dob}(a-b) on the half-space. DP: Dirichlet problem, NP: Neumann problem}
	\label{SEPError}
\end{figure}

\begin{table}[htb]
	\centering
	\caption{Numerical errors and computing costs for the Dirichlet and Neumann problems of scattering by the obstacle Fig.~\ref{3Dob}(a) on the half-space.}
	\begin{tabular}{|c|c|c|c|c|c|}
		\hline
		\multirow{2}{*}{$\omega$} & \multirow{2}{*}{$N_{DOF}$}  & \multicolumn{4}{|c|}{Dirichlet problem} \\
		\cline{3-6}
		& & Time(prec.) & Time(1 iter.) & $N_{{iter}}$($\epsilon_r$)&$\epsilon_{\infty}$ \\
		\hline
		$\pi$&43008&2.02 min&8.23 s&$10(3.57\times10^{-10})$&$5.54\times10^{-7}$     \\
		$4\pi$&43008&1.17 min&5.14 s&21($2.76\times10^{-8}$)&$3.54\times10^{-6}$ \\
		$10\pi$&96768&5.11 min&34.26 s&40 ($5.73\times10^{-6}$)&$1.45\times10^{-5}$  \\
		\hline
		\multirow{2}{*}{$\omega$} & \multirow{2}{*}{$N_{DOF}$}&  \multicolumn{4}{|c|}{Neumann problem} \\
		\cline{3-6}
		&& Time(prec.) & Time(1 iter.) & $N_{{iter}}$($\epsilon_r$)&$\epsilon_{\infty}$ \\
		\hline
		$\pi$&43008 &1.65 min&7.57 s&9($9.45\times 10^{-10}$ )&$6.19\times10^{-8}$     \\
		$4\pi$&43008&1.14 min&7.24 s&20($8.47\times10^{-9}$)&$7.74\times10^{-7}$\\
		$10\pi$&96768&5.93 min&1.02 min&48($8.17\times10^{-9}$)&$8.92\times10^{-7}$ \\
		\hline
	\end{tabular}
	\label{cost}
\end{table}

\begin{figure}[htb]
	\centering
	\begin{tabular}{cc}
		\includegraphics[scale=0.2]{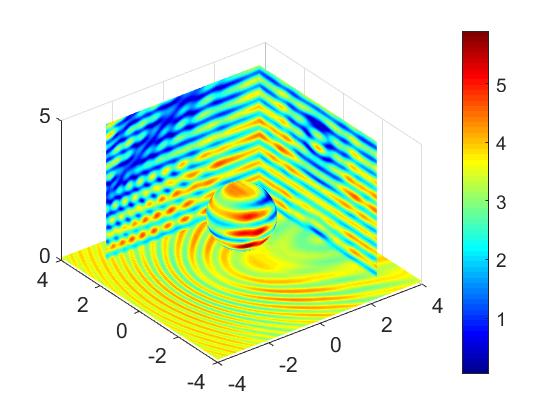} &
		\includegraphics[scale=0.2]{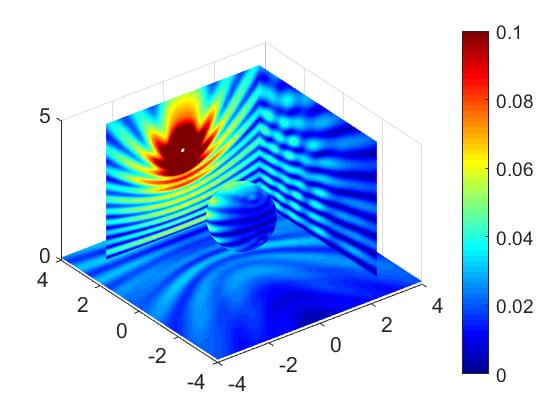} \\
		(a) Plane wave incidence &(b) Point source incidence \\
	\end{tabular}
	\caption{Absolute values of the total field resulting from the PML-based BIE method for the Neumann problem of scattering by the obstacle Fig.~\ref{3Dob}(a) on the half-space.}
	\label{SNP}
\end{figure}

\begin{figure}[htb]
	\centering
	\begin{tabular}{cc}
		\includegraphics[scale=0.2]{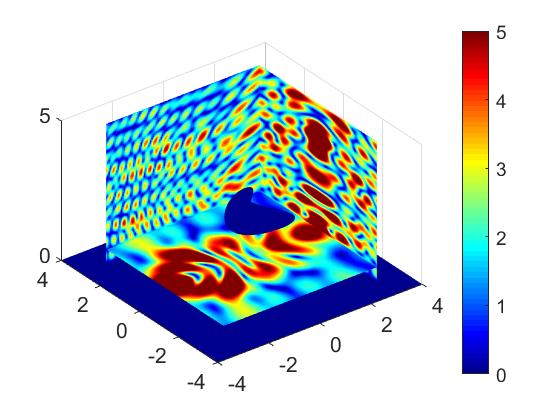} &
		\includegraphics[scale=0.2]{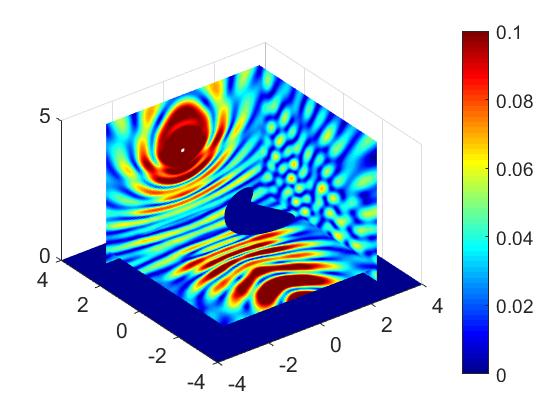} \\
		(a) Plane wave incidence &(b) Point source incidence \\
	\end{tabular}
	\caption{Absolute values of the total field resulting from the PML-based BIE method for the Dirichlet problem of scattering by the obstacle Fig.~\ref{3Dob}(c) on the half-space.}
	\label{BNP}
\end{figure}

\begin{figure}[htb]
	\centering
	\begin{tabular}{ccc}
		\includegraphics[scale=0.19]{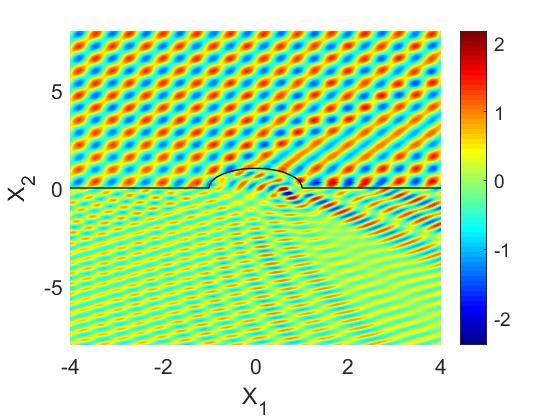} &
		\includegraphics[scale=0.19]{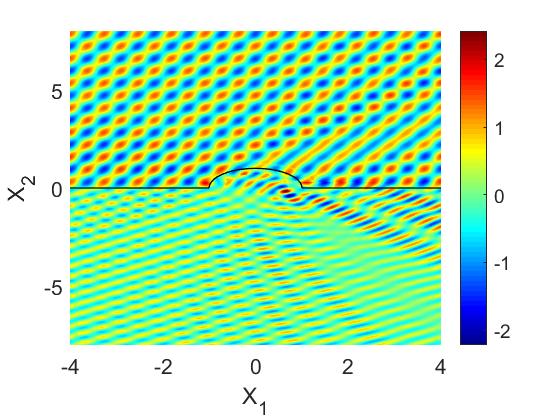} &
		\includegraphics[scale=0.19]{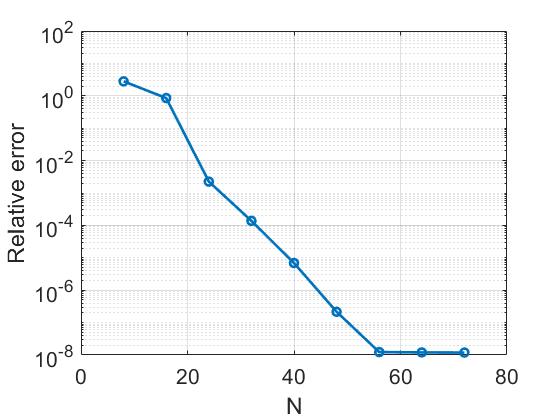} \\
		(a) Real parts &(b) Imaginary part &(c) Relative error $\epsilon_{\infty}$\\
	\end{tabular}
	\caption{(a,b) Real and imaginary parts of the total fields by the PML-based BIE method for the problem of scattering of a plane-wave by a semicircular bump; (c) The relative error $\epsilon_{\infty}$ of $u^{\rm tot}$ with respect to $N$.}
	\label{TLP2}
\end{figure}

\subsection{Two-dimensional examples}
\label{sec5.1}
In this subsection, we will test the accuracy and efficiency of the PML-based BIE methods for the two-dimensional problems with geometrical settings shown in Fig.~\ref{2Dob} wherein the thin black lines denote the impenetrable infinite boundary. To test the accuracy of the proposed solver, we consider the exact solution $u^{\rm exa}(x,z)=G(x,z)$ with $z=(0,2)$ for Fig.~\ref{2Dob}(a) and $z=(0,3)$ for Fig.~\ref{2Dob}(b), respectively, and the corresponding boundary data is given by $f=u^{\rm exa}(x,z)$ and $g=\partial_{\nu} u^{\rm exa}(x,z)$. We set $a_1=4$.

Firstly, we consider the Dirichlet problem of Fig.~\ref{2Dob}(a) with $k=\pi$ and the Neumann problem of Fig.~\ref{2Dob}(b) with $k=10\pi$. Fig.~\ref{2DDN} presents the absolute values of the numerical solutions of BIEs (\ref{PMLBIE1})-(\ref{PMLBIE2}) on $\Gamma^b$. It shows that the numerical solutions match perfectly with the exact data on $\partial\Omega_{\rm{PHY}}\cap\Pi$, and decay rapidly when approaching the endpoints of $\Gamma^b$. \textcolor{r1}{Next, we choose $k=2\pi$ and compute relative errors for different values of $\frac{T_i}{\lambda}$, as shown in Fig.~\ref{RET2d}. We can observe that the relative errors decay exponentially with the increase of PML thickness before the discretization error dominates the total error. It can also be seen that one can get sufficiently accurate solutions by choosing PMLs with a thickness of twice wavelength.} Tables~\ref{2Ddisc} and \ref{2Dkite} display the numerical errors $\epsilon_{\infty}$ for different degrees of freedom $N_{\rm DOF}$ as well as the corresponding numbers of iterations required by GMRES to achieve the residual tolerance $\epsilon_r=10^{-12}$. This clearly demonstrates the high accuracy of the proposed PML-based BIE solver. Next, we consider the scattering of the plane incident wave (\ref{PPW}) with $\theta^{\rm inc}=\frac{\pi}{4}$ and point source (\ref{PS}) located at $z=(2,5)\in\Omega$. Figs.~\ref{kitescat} and \ref{discscat} show the distribution of the total fields resulting from the PML-based BIE method for these two cases, respectively.

\subsection{Three-dimensional examples}
\label{sec5.2}

In this subsection, we demonstrate the performance of the proposed method for solving the three-dimensional scattering problems and the considered bounded obstacles on the half-space $\R^3_+$ are depicted in Fig.~\ref{3Dob}. Analogous to the discussion in two dimensions, we let $u^{\rm exa}(x,z)=G(x,z)$ be the exact solution, where $z=(0,0,2)$ locates inside the obstacles. We always set $a_1=a_2=2$.

In our first example, we consider the problems of Fig.~\ref{3Dob} (a) with $k=\pi$ and Fig.~\ref{3Dob} (b) with $k=10\pi$, and Fig.~\ref{3Dpi} and \ref{3D10pi} display the absolute values of the numerical solutions of BIEs (\ref{PMLBIE1})-(\ref{PMLBIE2}) on $\Gamma^b$, respectively. Here, 42 patches are selected for the boundary partitioning. It can be seen that the numerical solutions match perfectly with the exact data on $\partial\Omega_{\rm{PHY}}\cap\Pi$, and decay rapidly on PML interface $\partial\Omega_{\rm{PML}}\cap\Pi$. \textcolor{r1}{Choosing $k=2\pi$, the relative errors for different values of $\frac{T_i}{\lambda}$ are depicted in Fig.~\ref{RET3d}, which show that the relative errors decay exponentially with the increase of PML thickness and setting the PML thickness to twice the wavelength can obtain sufficiently accurate solutions.} Fig.~\ref{SEPError} presents the relative errors $\epsilon_{\infty}$ with respect to different $N$ (number of Chebyshev points in each patch) for the Dirichlet and Neumann problems with $k=\pi$, which clearly demonstrates the efficiency the proposed PML-based BIE solver for three-dimensional problems. Higher accuracy can be achieved by increasing the number $N^\beta$ and evaluating the integral kernels more carefully. Table~\ref{cost} lists the numerical errors together with other statistics such as pre-computation time, time per iteration and number of iterations used for the scattering problems at frequencies $k=\pi$, $k=4\pi$ and $k=10\pi$.

Next, we consider the scattering of the plane incident wave (\ref{PPW}) with angle pair $\theta^{\rm inc}=\frac{\pi}{3}$, $\phi^{\rm inc}=0$ and point source (\ref{PS}) located at $z=(0,3,3)\in\Omega$. Fig.~\ref{SNP} and \ref{BNP} show the distribution of the total fields with $k=4\pi$ resulting from the PML-based BIE method.

\section*{Acknowledgments}
WL is partially supported by NSF of Zhejiang Province for Distinguished Young Scholars Grant LR21A010001, NSFC Grant 12174310, and a Key Project of Joint Funds For Regional Innovation and Development (U21A20425). The work of LX is supported by NSFC Grant 12071060. TY gratefully acknowledges support from NSFC through Grant 12171465.

\end{document}